\documentclass[11pt,a4paper]{article}

\usepackage[math]{cellspace}
\usepackage[normalem]{ulem}
\usepackage{empheq}
\usepackage{psfrag}
\usepackage{amsmath,amssymb,cite,algorithm}
\usepackage{latexsym}
\usepackage{graphicx}
\usepackage{lscape}
\usepackage{afterpage}
\usepackage[color=green!40]{todonotes}
\usepackage{caption}
\usepackage{subcaption}
\usepackage{multirow}

\usepackage{hyperref}
\hypersetup{
	colorlinks=true,
	linkcolor=blue,
	filecolor=magenta,      
	urlcolor=cyan,
	citecolor=red,
}
\providecommand{\Rr}{\operatorname{R}}
\DeclareMathOperator*{\argmin}{argmin}
\DeclareMathOperator*{\rank}{rank}

\DeclareMathOperator*{\sgn}{sgn}

\newcommand{\Id}{\ensuremath{\operatorname{Id}}}

\newcommand{\ds}{\displaystyle}
\newcommand{\nexto}{\kern -0.54em}
\newcommand{\dR}{{\rm {I\ \nexto R}}}

\newcommand{\dZ}{{\cal Z \kern -0.7em Z}}
\newcommand{\dC}{{\rm\hbox{C \kern-0.8em\raise0.2ex\hbox{\vrule
				height5.4pt width0.7pt}}}}
\newcommand{\dQ}{{\rm\hbox{Q \kern-0.85em\raise0.25ex\hbox{\vrule
				height5.4pt width0.7pt}}}}
\newcommand{\proofbox}{\hspace{\fill}{$\Box$}}

\newtheorem{theorem}{Theorem}
\newtheorem{corollary}{Corollary}

\newtheorem{fact}{Fact}
\newtheorem{remark}{Remark}

\newenvironment{proof}{Proof.}{\proofbox}

\newcommand{\dom}{\ensuremath{\operatorname{dom}}}

\newcommand{\prox}{\ensuremath{\operatorname{Prox}}}

\newcommand{\scal}[2]{\left\langle{#1},{#2}  \right\rangle}

\definecolor{myblue}{rgb}{0.9,0.9,0.98}

\begin{document}

\author{Regina S. Burachik\thanks{School of Mathematical Sciences, Adelaide University, Australia. \\
Email: \texttt{regina.burachik@adelaide.edu.au}, \texttt{yalcin.kaya@adelaide.edu.au}.} \quad\ Bethany I. Caldwell\thanks{School of Mathematics and Physics, University of Queensland, Australia. \\ Email: \texttt{bethany.caldwell@uq.edu.au}}\quad\ C. Yal{\c c}{\i}n Kaya\footnotemark[1] \quad\ Walaa M. Moursi\thanks{Department of Combinatorics and Optimization, 
University of Waterloo,
Waterloo, Ontario N2L~3G1, Canada. Email: \texttt{walaa.moursi@uwaterloo.ca}.}
}

\title{\bf Best Approximation Optimal Control for Infeasible Double Integrator and Douglas--Rachford Algorithm}

\maketitle

\vspace*{-7mm}

\begin{abstract} 
{\noindent\sf
We consider the problem of finding (in some sense) the best approximation control for an infeasible double integrator.  The control function is constrained by upper and lower bounds that are too tight and thus cause infeasibility.  The infeasibility is characterized by a gap function (representing the separation between two constraint sets) whose squared ${\cal L}^2$-norm is to be minimized to find the best approximation control solution.  First, we review the existing results for problems involving a general linear control system. Then, for the infeasible double integrator problem, we present an analytical solution for the bang--bang control with at most one switching.  The infinite-dimensional optimization problem is reduced to the problem of solving two algebraic equations in two variables, to compute the switching time and gap function.  We discuss numerical approaches to solving the system of equations.  Finally, we describe the (relaxed) Douglas--Rachford algorithm for the double integrator problem and carry out numerical experiments to illustrate the implementation of the algorithm and test performance.}
\end{abstract}

\begin{verse}
{\bf Key words}\/: {\sf Optimal control, Douglas--Rachford algorithm, Infeasible problem, Inconsistent problem, Gap function, Bang--bang control,  Double integrator.}
\end{verse}

\begin{verse} 
{\bf Mathematics Subject Classification:} {\sf 49K15, 49K30, 65K10}
\end{verse}

\pagestyle{myheadings}
\markboth{}{\sf\scriptsize Best Approximation Optimal Control and Douglas--Rachford Algorithm\ \ by Burachik, Caldwell, Kaya, and Moursi}

\section{Introduction}

The simple differential equation $\ddot{y}(t) = u(t)$ is called the {\em double integrator} since the solution $y(t)$ is obtained by integrating $u(t)$ twice.  For a mechanical system, $y(t)$, $\dot{y}(t)$, and $\ddot{y}(t)$ denote, respectively, the position, velocity, and acceleration of a unit point mass moving along a straight line on a frictionless plane, and $u(t)$ is the force (or control) acting on the point mass, all at a time instant $t$.  The double integrator also models the analogous rotational-mechanical and electrical systems~\cite{Wellstead2000}.

Optimal control of the double integrator, which is concerned with finding a control function~$u$ that minimizes a functional, such as ``energy'' or duration of time, is simple and yet rich enough to be studied when introducing and illustrating many basic and new concepts or when testing new numerical approaches in optimal control; see, for example,~\cite{BauBurKay2019, BurCalKayMou2024, Locatelli2017, BurKayLiu2023, BurKayLiu2026, Kaya2020}.  In this sense, the role of the double integrator is similar to that of the Dahlquist test problem, which is widely used for the stability and error analysis of ODE solvers~\cite{CorKayMoi2019, CorKay2026}.

If there is no constraint on the control $u(t)$, a closed-form analytical solution can be easily written for the {\em minimum-energy control} of the double integrator. Specifically speaking, the solution for the optimal control~$u$ minimizing the squared ${\cal L}^2$-norm of $u$, subject to the double integrator, can be expressed as an affine function in $t$ with coefficients defined by initial and terminal positions and velocities~\cite{BauBurKay2019}.  This constitutes the building block for cubic splines.  However, if the control $u(t)$ is constrained (typically by means of upper and lower bounds), then a solution (although known to be a concatenation of constant and affine functions) can only be found numerically.

The Douglas--Rachford (DR) algorithm was first used to find a numerical solution to the control-constrained minimum-energy control of the double integrator in~\cite{BauBurKay2019}, and it was shown that the DR algorithm was significantly more efficient (in terms of CPU time and accuracy) than the {\em first-discretize-then-optimize approach}, in which the optimal control problem is discretized and the resulting large-scale problem is solved by standard optimization software.  Later, in~\cite{BurCalKayMou2024}, an extension of the DR algorithm with a ``dual step'' was also used to solve the dual of the optimal control problem (as well as the primal problem) involving the double integrator, so that the necessary conditions of optimality for the control can be numerically verified.

In real life situations, the control $u(t)$, which typically represents the resources available to practitioners to achieve an optimal outcome, might be overly restricted, making the lower or upper bound (or both) of the control function too tight, thus resulting in an {\em infeasible} dynamical system and, in turn, an {\em infeasible} or {\em inconsistent} optimal control problem.  Such infeasible optimal control problems involving general linear ODEs, including the double integrator as a special case, have previously been studied in~\cite{BurKayMou2024}.  However, only the first-discretize-then-optimize approach was used in~\cite{BurKayMou2024} to obtain a {\em best approximation solution} (such that a measure of infeasibility is minimized).  In~\cite{BurKayMou2024}, the AMPL--Ipopt optimization suite~\cite{AMPL, WacBie2006} was implemented to solve large-scale optimization problems arising from the time-discretization of various example optimal control problems.  One of the novelties of the current paper is the study of the DR algorithm applied to an infeasible optimal control problem in Section~\ref{sec:DR}.  A preliminary application of the DR algorithm to infeasible optimal control problems can be found in~\cite[Section~6.7]{Caldwell2024}{, where the minimum-energy control of the double integrator and the damped-harmonic oscillator were considered.  In~\cite[Section~6.7]{Caldwell2024} performance comparisons amongst the relaxed DR algorithm, the Peaceman--Rachford algorithm and the AMPL--Ipopt suite are also included}. In the current paper, we study the infeasible optimal control of the double integrator in more detail.

In~\cite{BurKayMou2024}, the infeasible optimal control of the double integrator with $|u(t)| \le a$, $a$ a positive constant and specified initial and terminal states is considered where the infeasibility of the problem is caused by $a$ being too small.  In~\cite[Theorem~5]{BurKayMou2024}, a complete analytical {\em best approximation solution} to this problem is presented for the {\em critically feasible} case with $a = a_c$ such that the problem is feasible for $a = a_c$ but infeasible for any $a < a_c$.  Another novelty of the present paper is the complete analytical best approximation solution to the infeasible optimal control of the double integrator with $a < a_c$, as described in~Theorem~\ref{DI_infeas_control}.  Incidentally, the proof of Theorem~\ref{DI_infeas_control} turns out to be more challenging than that of \cite[Theorem~5]{BurKayMou2024}.

In~\cite{BurKayMou2024}, the constraint set is split into two, namely the {\em affine set} defined by the general linear dynamics with the specified boundary conditions and the {\em box} defined by the control constrained by simple bounds.  When the intersection of these two sets is empty (notably because of the tight bounds on the control), the optimal control problem becomes infeasible.  The {\em gap} between the two constraint sets is defined as the function that minimizes the ${\cal L}^2$-distance between these sets.  Then \cite[Theorem~3]{BurKayMou2024} states that the best approximation control that belongs to the box is of {\em bang--bang} type (i.e., the control function switches between the upper and lower bounds) with an unknown number of switchings and provides an expression for the gap function.

Theorem~\ref{DI_infeas_control} presented in the present paper asserts that the best approximation control is of bang--bang type with at most one switching, and the switching time and the gap function can be found by solving a system of two polynomial equations in two variables.  In other words, the switching time can be obtained with very high accuracy; so can the (affine) gap function.  We also prove an asymptotic result for the switching time as $a$ tends to zero, which might have practical value when $a$ is very small.  We consider a case study and investigate the solvability (in a reasonable number of iterations) of this system of equations by constructing the colour-coded number of iterations required to reach a solution of these equations when the Newton method and a generalized Newton method are used.

In Section~\ref{sec:DR}, we consider the same case study and compute the errors for the DR algorithm using the accurate switching times found as described above.  Our rationale behind the formulation of the DR algorithm for the double integrator problem here is to use it in the future for problems other than those involving the double integrator, for which an analytical solution cannot be obtained.

The paper is organized as follows.  In Section~\ref{sec:review}, we provide a review of the setting and results in~\cite{BurKayMou2024}.  In Section~\ref{sec:DI}, we describe the optimal control problem involving the double integrator and present the results on the bang--bang optimal control, the switching time and the gap function, and carry out a case study.  We describe the DR algorithm in Section~\ref{sec:DR}, and present numerical experiments involving the same case study as in Section~\ref{sec:DI}.  Finally, we provide concluding remarks in Section~\ref{sec:conclusion}.

\section{Review of the Case With General Linear Dynamics}
\label{sec:review}

In this section, we provide a review of the setting and the results in~\cite{BurKayMou2024} for the case where the dynamics are given by linear state differential equations in $x$, and $u$ appears linearly. Obviously, here the double integrator is a special case.

\subsection{The setting}

Consider an optimal control problem with the aim of finding a control $u$, which minimizes the functional
\begin{equation}  \label{obj_fun}
\ds \int_{0}^{1} f_0(x(t),u(t),t)\, dt\,,
\end{equation}
subject to the differential equation constraints
\begin{equation}  \label{ODE}
	\dot{x}(t) = A(t)\,x(t) + B(t)\,u(t)\,,\ \ \mbox{for a.e.\ } t\in[0,1]\,,
	\end{equation}
with $\dot{x} := dx/dt$, and the boundary conditions
\begin{equation}  \label{BC}
	\varphi(x(0),x(1)) = 0\,.
\end{equation}
The time horizon is set to $[0,1]$, but without loss of generality, it can be taken as any interval $[t_0,t_f]$, with $t_0$ and $t_f$ specified.  The integrand $f_0:\dR^n\times\dR^m\times[0,1]\to\dR_+$ is continuous.  Define the {\em state variable vector} $x:[0,1]\to\dR^n$ with $x(t) := (x_1(t)\,\ldots,x_n(t))\in\dR^n$ and the {\em control variable vector} $u:[0,1]\to\dR^m$ with $u(t) := (u_1(t)\,\ldots,u_m(t))\in\dR^m$. 
The time-varying matrices $A:[0,1]\to \dR^{n\times n}$ and $B:[0,1]\to\dR^{n\times m}$ are continuous.  The vector function $\varphi:\dR^{2n}\to\dR^r$, with $\varphi(x(0),x(1)) := (\varphi_1(x(0),x(1)),\ldots,\varphi_r(x(0),x(1)))\in\dR^r$, is affine.

In practical situations, it is realistic to restrict the control function $u$, for example, to impose simple bounds on the components of $u(t)$; namely,
\begin{equation}  \label{bounds}
	\underline{a}_i(t) \le u_i(t) \le \overline{a}_i(t)\,,\ \ \mbox{for a.e.\ } t\in[0,1]\,,
\end{equation}
where $\underline{a}_i,\overline{a}_i:[0,1]\to\dR$ are continuous, with $\underline{a}_i(t) \le \overline{a}_i(t)$, for all $t\in[0,1]$, $i = 1,\ldots,m$.

The objective functional in~\eqref{obj_fun} and the constraints in~\eqref{ODE}--\eqref{BC} and \eqref{bounds} can be put together to pose the  optimal control problem in a concise form as follows.
\[
\mbox{(P) }\left\{\begin{array}{rl}
	\ds\min_{u(\cdot)} & \ \ \ds\int_0^1 f_0(x(t),u(t),t)\, dt \\[4mm] 
	\mbox{subject to} & \ \ \dot{x}(t) = A(t)\,x(t) + B(t)\,u(t)\,,\ \ \mbox{for a.e.\ } t\in[0,1]\,, \\[2mm]
	& \ \ \varphi(x(0),x(1)) = 0\,, \\[2mm]
	& \ \ \underline{a}_i(t) \le u_i(t) \le \overline{a}_i(t)\,,\ \ \mbox{for a.e.\ } t\in[0,1]\,,\ \ i = 1,\ldots,m\,.
	\end{array} \right.
\]
We split the constraints of Problem~(P) into two sets: 
\begin{eqnarray} 
	&& {\cal A} := \big\{u\in {\cal L}^{2}([0,1];\dR^m)\ |\ \exists x\in W^{1,2}([0,1];\dR^n)\mbox{ which solves } \nonumber \\[1mm]
	&&\hspace*{45mm} \dot{x}(t) = A(t)\,x(t) + B(t)\,u(t)\,,\ \ \mbox{for a.e.\ } t\in[0,1]\,, \mbox{ and }\nonumber \\[1mm]
	&&\hspace*{45mm} \varphi(x(0),x(1)) = 0 \big\}\,, \label{A} \\[2mm]
	&& {\cal B} := \big\{u\in {\cal L}^{2}([0,1];\dR^m)\ |\ \underline{a}_i(t) \le u_i(t) \le \overline{a}_i(t)\,,\ \mbox{for a.e.\ } t\in[0,1]\,,\ i = 1,\ldots,m\big\}\,, \label{B}
\end{eqnarray}
where $W^{1,2}([0,1];\mathbb{R}^q)$ is the Sobolev space of absolutely continuous functions, namely,
\[W^{1,2}([0,1];\mathbb{R}^q):=\left\{z\in {\cal L}^2([0,1];\mathbb{R}^q)\,|\,\dot{z}:=dz/dt\in {\cal L}^2([0,1];\mathbb{R}^q)\right\},\]
endowed with the norm
\[\|z\|_{W^{1,2}}:=(\|z\|_{{\cal L}^2}^2+\|\dot{z}\|_{{\cal L}^2}^2)^{1/2}.\]
In the particular case where the objective function $f_0$ is quadratic in $x$ and $u$, Problem~(P) is called a {\em linear quadratic} (LQ) optimal control problem.
We assume that the control system $\dot{x}(t) = A(t)x(t) + B(t)u(t)$ is {\em controllable}---see the definition of {\em componentwise controllability} in Section~\ref{subsec:controllability}, which is a stronger version of controllability. Then there exists a (possibly not unique) $u(\cdot)$ such that the boundary-value problem given in ${\cal A}$ has a solution $x(\cdot)$.  In other words, ${\cal A} \neq \varnothing$.  Also, clearly, ${\cal B} \neq \varnothing$.  Recall that $\varphi$ is affine, so the constraint set~${\cal A}$ is an {\em affine subspace}. We note that by \cite[Corollary 1]{BurCalKay2024},
\begin{equation}  \label{eq:A closed}
{\cal A}\ \ \mbox{is closed}.  
\end{equation}
Given that ${\cal B}$ is a {\em box}, the constraint sets  ${\cal A}$ and ${\cal B}$ turn out to be two closed and convex sets in a Hilbert space. Using the fact that every sequence that converges in ${\cal L}^2$ has a subsequence that converges pointwise almost everywhere, it can be directly checked that ${\cal B}$ is closed in ${\cal L}^2(0,1;\dR^m)$.  It will be convenient to use the expression
\[
B(t)u(t) = \sum_{i=1}^m b_i(t)\,u_i(t)\,,
\]
where $b_i(t)$ is the $i$th column of the matrix $B(t)$, interpreted as the column vector associated with the $i$th control component $u_i$.
	
If ${\cal A} \cap {\cal B} \neq \varnothing$\,, then one has a {\em feasible} optimal control problem.  The {\em feasibility problem} is posed as one of finding an element in ${\cal A} \cap {\cal B}$, namely:
\begin{equation}  \label{prob:feas}
	\mbox{Find\ \ } u \in {\cal A} \cap {\cal B}\,.
\end{equation}
However, if ${\cal A} \cap {\cal B} = \varnothing$ then the problem is said to be {\em infeasible}.  Obviously, the feasibility problem in~\eqref{prob:feas} has no solution in this case, but in Section~\ref{subsec:best_approx} we will recall the problem of finding (in some sense) a {\em best approximation solution} and summarize the results obtained in~\cite{BurKayMou2024}.

\subsection{Problem of minimizing gap}

Given the LQ control problem and the definitions of the constraint sets ${\cal A}$ and ${\cal B}$ in~\eqref{A}--\eqref{B}, we consider the case where ${\cal A} \cap {\cal B} = \varnothing$.  
A {\em best approximation pair} $(u_{\cal A}^*, u_{\cal B}^*) \in {\cal A}\times{\cal B}$ is one that solves
\begin{equation}  \label{prob:min_dist}
	\min_{\substack{u_{\cal A}\in {\cal A} \\ u_{\cal B}\in {\cal B}}}\ \ \ds\frac{1}{2}\,\|u_{\cal A} - u_{\cal B}\|_{{\cal L}^2}^2\,,
\end{equation}
where $\|\cdot\|_{{\cal L}^2}$ is the ${{\cal L}^2}$ norm.  Define the {\em gap} ({\em function}) {\em vector} (see \cite{BauMou2017})
\begin{equation}  \label{gap_v}
	v := u_{\cal A} - u_{\cal B},\,
 \qquad (u_{\cal A}, u_{\cal B}) \in {\cal A}\times{\cal B}.
\end{equation}
Here, the ${\cal L}^2$-distance $\|u_{\cal A} - u_{\cal B}\|_{{\cal L}^2}$ is referred to as the {\em gap}.

Using $u_{\cal A} = v + u_{\cal B}$ and the definitions of ${\cal A}$ and ${\cal B}$, the problem of gap-minimization in~\eqref{prob:min_dist} can be rewritten in the format of a standard {\em optimal control problem} as follows.
\[
\mbox{(Pf) }\left\{\begin{array}{rl}
\ds\min_{v(\cdot),u_{\cal B}(\cdot)} & \ \ \ds\frac{1}{2}\int_0^1 \|v(t)\|_2^2\, dt \\[2mm] 
\mbox{subject to} 
& \ \ \ds\dot{x}(t) = A(t)x(t) + \sum_{i=1}^m b_i(t)(v_i(t) + u_{{\cal B},i}(t))\,,\ \ \mbox{for a.e.\ } t\in[0,1]\,, \\[2mm]
& \ \ \varphi(x(0),x(1)) = 0\,, \\[2mm]
& \ \ \underline{a}_i(t) \le u_{{\cal B},i}(t) \le \overline{a}_i(t)\,,\ \ \mbox{for a.e.\ } t\in[0,1]\,,\ \ i = 1,\ldots,m\,,
\end{array} \right.
\]
where $\|\cdot\|_2$ is the Euclidean norm.  Problem~(Pf) is an optimal control problem with two control variable vectors, namely $v$ and $u_{\cal B}$, where $v(t) := (v_1(t),\ldots,v_m(t))\in\dR^m$ and\linebreak $u_{\cal B}(t) := (u_{{\cal B},1}(t),\ldots,u_{{\cal B},m}(t))\in\dR^m$.

With the introduction of the new unconstrained (or free) control $v$, Problem~(Pf) is feasible.  We further note that Problem~(Pf), having a quadratic objective functional and linear equality and inequality constraints, is convex.

\subsection{Componentwise controllability}
\label{subsec:controllability}

We call the control system\ \  $\dot{x}(t) = A(t)x(t) + B(t)u(t)$\ \ {\em controllable w.r.t.\ $u_i$} on $[0,1]$ if given any initial state $x(0) = x_0$ there exists a continuous $i$th component $u_i(\cdot)$ of the control $u(\cdot)$ such that the corresponding solution of
\begin{equation}  \label{contr_sys_ui}
	\dot{x}(t) = A(t)x(t) + b_i(t)u_i(t)
\end{equation}
satisfies $x(1) = 0$.  Then clearly Theorems~1 and 2 cited in~\cite{BurKayMou2024} for a more general definition of controllability, with $B(t)u(t)$ replaced by $b_i(t)u_i(t)$, hold for the system in~\eqref{contr_sys_ui}.

If the control system is time-invariant, i.e., $A(t) = A$ and $b_i(t) = b_i$, for all $t\in[0,1]$, where $A$ and $b_i$ are constant matrices, then Theorem~2 cited in~\cite{BurKayMou2024} simplifies to checking the straightforward {\em Kalman controllability rank condition}\,:  the control system\ \  $\dot{x}(t) = A\,x(t) + B\,u(t)$\ \ is controllable w.r.t.\ $u_i$ on $[0,1]$ if $\rank\, [\ b_i\ |\  A\,b_i\ |\  \ldots\ |\ A^{n-1}\,b_i\ ] = n$.

\subsection{Best approximation control}
\label{subsec:best_approx}

Next, we provide the result~\cite[Theorem~3]{BurKayMou2024} for the best approximation solution in the set ${\cal B}$, in the case when the constraint sets ${\cal A}$ and ${\cal B}$ are disjoint. 
\begin{theorem}[Gap Vector and Best Approximation Control in \boldmath{${\cal B}$}~\cite{BurKayMou2024}]  \label{thm:gap&bbang}
With the notation of Problem (Pf), assume that ${\cal A} \cap {\cal B} = \varnothing$.  Then the optimal gap vector is given by
\begin{equation}  \label{eq:gap_soln}
v(t) = -B^T(t)\lambda(t)\,,
\end{equation}
for all $t\in[0,1]$, where the adjoint variable vector $\lambda(\cdot)$ solves\ \ $\dot{\lambda}(t) = -A^T(t)\,\lambda(t)$, with appropriate transversality conditions.  Moreover, suppose that $A(\cdot)$ and $B(\cdot)$ are sufficiently smooth and that the control system\ \ $\dot{x}(t) = A(t)x(t) + B(t)u(t)$\ \ is controllable w.r.t.\ $u_i$ on any $[t',t'']\subset[0,1]$, $t'<t''$, for some $i = 1,\ldots,m$.  Then, for a.e.\ $t\in[0,1]$,
\begin{equation}  \label{uB}
	u_{{\cal B},i}(t) = \left\{\begin{array}{rl}
	\overline{a}_i(t)\,, &\ \ \mbox{if\ \ } v_i(t) \ge 0\,, \\[1mm]
	\underline{a}_i(t)\,, &\ \ \mbox{if\ \ } v_i(t) < 0\,.
	\end{array} \right.
\end{equation} 
In other words, such $u_{{\cal B},i}$ is of bang--bang type.
\end{theorem}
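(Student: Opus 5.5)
We treat Problem~(Pf) as a convex optimal control problem and apply the Pontryagin maximum principle (or, equivalently, Karush--Kuhn--Tucker conditions in ${\cal L}^2$, since all constraints are linear and the objective is convex quadratic). With costate $\lambda$ and abnormal multiplier $\lambda_0\ge 0$, the Hamiltonian is
\[
H = \tfrac{\lambda_0}{2}\|v\|_2^2 + \lambda^T\Big(A(t)x + \sum_{i=1}^m b_i(t)(v_i+u_{{\cal B},i})\Big).
\]
The adjoint equation is $\dot\lambda = -A^T(t)\lambda$, with transversality conditions inherited from the affine map $\varphi$. Existence of a minimizer comes first. ${\cal A}$ is closed by~\eqref{eq:A closed}, and ${\cal B}$ is closed, convex and bounded, hence weakly compact. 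So ${\cal A}-{\cal B}$ is closed and convex, a minimal-norm gap $v$ exists, and a corresponding pair $(u_{\cal A}^*,u_{\cal B}^*)$ exists.

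Next we exclude the abnormal case $\lambda_0=0$. For this we use controllability: if $\lambda_0=0$, minimizing $H$ over the free control $v$ forces $B^T\lambda\equiv 0$. Controllability (via the Kalman/Gramian characterization in Theorems~1--2 of~\cite{BurKayMou2024}) then gives $\lambda\equiv 0$, which together with the transversality multipliers contradicts nontriviality. Taking $\lambda_0=1$, minimizing $H$ in the unconstrained $v$ gives $v(t)=-B^T(t)\lambda(t)$, which is~\eqref{eq:gap_soln}. Minimizing in $u_{\cal B}$ over the box gives, componentwise, that $u_{{\cal B},i}$ minimizes $\lambda^Tb_i\,u_{{\cal B},i}=-v_i\,u_{{\cal B},i}$ over $[\underline a_i,\overline a_i]$. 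Hence $u_{{\cal B},i}=\overline a_i$ wherever $v_i>0$ and $u_{{\cal B},i}=\underline a_i$ wherever $v_i<0$.

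The main obstacle is the set where $v_i=-b_i^T\lambda$ vanishes: we must show it has measure zero, so that~\eqref{uB} holds a.e. The assignment at $v_i=0$ is then immaterial. Suppose instead $b_i^T(t)\lambda(t)=0$ on a set $E$ of positive measure. Since $A,B$ are smooth, we first aim to show that $E$ contains an interval, or argue at a density point of $E$ with repeated differentiation. Differentiating repeatedly with $\dot\lambda=-A^T\lambda$ yields $\lambda^T(t)\,(M_0b_i,M_1b_i,\dots)(t)=0$ on $[t',t'']$, where $M_0=\mathrm{Id}$ and $M_{k+1}=-AM_k+\dot M_k$ are the standard Silverman--Meadows operators. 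Controllability w.r.t.\ $u_i$ on $[t',t'']$ makes these vectors span $\dR^n$, so $\lambda\equiv 0$ on $[t',t'']$, and by uniqueness for the linear ODE $\lambda\equiv 0$ on $[0,1]$.

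Then $v\equiv 0$, so $u_{\cal A}=u_{\cal B}$, which contradicts ${\cal A}\cap{\cal B}=\varnothing$. This is exactly where the infeasibility hypothesis enters. To pass from "positive measure" to "contains an interval" we would rather avoid analyticity assumptions. The cleaner route is to read the controllability hypothesis in its Gramian form: singular arcs on sets of positive measure force $b_i^T\lambda=0$ a.e.\ on some subinterval, and then the Gramian argument on that subinterval gives $\lambda=0$ directly. We expect to invoke Theorem~2 of~\cite{BurKayMou2024} for this step.
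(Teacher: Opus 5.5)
Your setup is sound: existence via weak compactness of ${\cal B}$, $v=-B^T\lambda$ from minimizing $H$ in $v$, $u_{{\cal B},i}=\overline a_i$ (resp.\ $\underline a_i$) where $v_i>0$ (resp.\ $v_i<0$), and $\lambda\equiv0\Rightarrow v\equiv0\Rightarrow {\cal A}\cap{\cal B}\neq\varnothing$. The gap is the step you flag yourself: you never establish that $E=\{t:\ b_i^T(t)\lambda(t)=0\}$ is null, and neither of your justifications works.

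\begin{itemize}
\item Your ``Gramian route'' rests on the claim that vanishing on a set of positive measure forces $b_i^T\lambda=0$ a.e.\ on some subinterval. This is false for merely smooth functions. A $C^\infty$ function can vanish exactly on a fat Cantor set, which has positive measure but contains no interval. The Gramian argument only applies once you have vanishing on a whole interval.
\item Your density-point route gets further. At an accumulation point $t_0$ of $E$, Rolle's theorem gives $\lambda(t_0)^Tm_k(t_0)=0$ for $k=0,\dots,n-1$, where $m_0=b_i$ and $m_{k+1}=-Am_k+\dot m_k$. But controllability w.r.t.\ $u_i$ on subintervals does not give $\rank[m_0|\cdots|m_{n-1}](t_0)=n$. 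A rank criterion of Silverman--Meadows type (Theorem~2 of~\cite{BurKayMou2024}) yields controllability from rank, not rank from controllability. Also, your phrase ``$=0$ on $[t',t'']$'' already presupposes the interval you do not have.
\end{itemize}

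In fact this step cannot be closed from the stated hypotheses if ``sufficiently smooth'' only means $C^\infty$. Take $n=m=1$, $A\equiv0$, $b\in C^\infty[0,1]$ with zero set a fat Cantor set $C$, $|u|\le1$, $x(0)=0$ and $x(1)=K>\int_0^1|b|$.
\begin{itemize}
\item The problem is infeasible, since $|x(1)|\le\int_0^1|b|<K$.
\item It is controllable on every $[t',t'']$, because $C$ is nowhere dense and so $\int_{t'}^{t''}b^2>0$.
\item By Cauchy--Schwarz, the optimal gap is $v=cb$ with $c>0$, and $u_{\cal B}=\sgn(b)$ off $C$.
\item On $C$ the value of $u_{\cal B}$ enters neither the cost nor the constraint. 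Hence $u_{\cal B}=0$ on $C$ is also optimal, and it violates~\eqref{uB} on a set of positive measure.
\end{itemize}

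What your argument does prove is that there are no singular \emph{intervals}, which is exactly what the ``controllable on any $[t',t'']$'' hypothesis delivers. To get ``a.e.''\ you need extra structure, and either of the following suffices.
\begin{itemize}
\item If $A,B$ are analytic, then $b_i^T\lambda$ is analytic. It is not identically zero, since $\lambda\not\equiv0$ and the system is controllable on $[0,1]$, so it has finitely many zeros.
\item If $\rank[m_0|\cdots|m_{n-1}](t)=n$ holds for every $t$, your Rolle argument at an accumulation point forces $\lambda(t_0)=0$, so $E$ is finite.
\end{itemize}
Both conditions hold for the double integrator, where $v$ is affine and nonzero, so the paper's use of the result is unaffected.
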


The following corollary is restated from~\cite[Remark~1]{BurKayMou2024}.
\begin{corollary}[Best Approximation Control in \boldmath{${\cal A}$}~\cite{BurKayMou2024}] \rm
Suppose that the hypotheses of Theorem~\ref{thm:gap&bbang} hold.  Then the best approximation solution in the set ${\cal A}$ is given by
\begin{equation}  \label{uA}
	u_{{\cal A},i}(t) = \left\{\begin{array}{rl}
	\overline{a}_i(t) + v_i(t)\,, &\ \ \mbox{if\ \ } v_i(t) \ge 0\,, \\[1mm]
	\underline{a}_i(t) + v_i(t)\,, &\ \ \mbox{if\ \ } v_i(t) < 0\,,
	\end{array} \right.
\end{equation}
for a.e.\ $t\in[0,1]$.
\end{corollary}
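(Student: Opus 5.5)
The plan is to obtain the corollary directly from Theorem~\ref{thm:gap&bbang} and the definition of the gap vector in~\eqref{gap_v}. The gap vector is defined as $v = u_{\cal A} - u_{\cal B}$, where $(u_{\cal A}, u_{\cal B}) \in {\cal A}\times{\cal B}$ is the best approximation pair solving~\eqref{prob:min_dist}, or equivalently an optimal pair for Problem~(Pf). This gives the identity $u_{\cal A} = u_{\cal B} + v$ in ${\cal L}^2([0,1];\dR^m)$. It holds componentwise and for a.e.\ $t\in[0,1]$:
\[
u_{{\cal A},i}(t) = u_{{\cal B},i}(t) + v_i(t)\,,\qquad i = 1,\ldots,m\,.
\]

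Next I will substitute the bang--bang characterization~\eqref{uB} of $u_{{\cal B},i}$. Under the hypotheses of Theorem~\ref{thm:gap&bbang}, that characterization holds for a.e.\ $t$ on the set where $v_i(t)\ge 0$ and on the set where $v_i(t)<0$. These two sets partition $[0,1]$. Adding $v_i(t)$ to each branch gives exactly~\eqref{uA}, still for a.e.\ $t$. The only measure-theoretic point is that a finite union of null exceptional sets is null. The two sets $\{v_i\ge 0\}$ and $\{v_i<0\}$ are measurable, and in fact Borel since $v=-B^T\lambda$ is continuous by~\eqref{eq:gap_soln}. So the piecewise definition is a legitimate element of ${\cal L}^2$.

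It also helps to note, though it is not needed, that $u_{\cal A}$ so defined automatically lies in ${\cal A}$. This holds because it is the ${\cal A}$-component of the optimal pair, with $x$ the state solving the ODE in (Pf). No separate verification of the dynamics is required.

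I do not expect a real obstacle, since all the substance sits in Theorem~\ref{thm:gap&bbang}. The one point needing care is the index range. As stated, the theorem's controllability hypothesis gives~\eqref{uB} only for those indices $i$ for which the system is controllable w.r.t.\ $u_i$. I will therefore read the corollary as asserting~\eqref{uA} for exactly those components. For components without that hypothesis, the identity $u_{{\cal A},i} = u_{{\cal B},i}+v_i$ still holds, but $u_{{\cal B},i}$ need not be bang--bang, for instance on a set where $v_i=0$.
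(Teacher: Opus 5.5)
Your proposal is correct and is the argument the paper intends. The paper gives no proof of its own, since it restates the result from \cite{BurKayMou2024}, and it likewise derives $u_{\cal A}=u_{\cal B}+v$ directly from \eqref{gap_v} in the proof of the next corollary. Substituting the bang--bang formula \eqref{uB} then gives \eqref{uA}. Your caveat is a fair reading of Theorem~\ref{thm:gap&bbang}: \eqref{uA} is only guaranteed for the indices $i$ that satisfy the componentwise controllability hypothesis.
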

We observe that while the component $v_i(\cdot)$, as can be deduced from~\eqref{eq:gap_soln}, is continuous, $u_{{\cal A},i}(\cdot)$ is piecewise continuous.

The following straightforward result is new.

\begin{corollary}[Projection of \boldmath{$u_{\cal B}$} onto \boldmath{${\cal A}$}] \rm
Suppose that $u_{\cal A}$ and $u_{\cal B}$ are the best approximation controls.  Then $u_{\cal A}$ is the projection of $u_{\cal B}$ onto ${\cal A}$; namely
\begin{equation}  \label{eq:proj_uB_A}
	u_{\cal A} = P_{\cal A}(u_{\cal B})\,.
\end{equation}
\end{corollary}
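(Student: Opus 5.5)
The argument rests on the variational characterization of best approximation pairs for two closed convex sets. By \eqref{eq:A closed}, ${\cal A}$ is a nonempty closed affine subspace of the Hilbert space ${\cal L}^2([0,1];\dR^m)$, so the projector $P_{\cal A}$ is well defined and single-valued. I will fix $u_{\cal B}$, the second component of an optimal pair $(u_{\cal A},u_{\cal B})$ solving \eqref{prob:min_dist}, and then consider the partial minimization of the objective over the first variable alone. Since the pair minimizes $\tfrac12\|u_{\cal A}'-u_{\cal B}'\|_{{\cal L}^2}^2$ jointly over ${\cal A}\times{\cal B}$, fixing $u_{\cal B}'=u_{\cal B}$ gives
\[
\tfrac12\|u_{\cal A}-u_{\cal B}\|_{{\cal L}^2}^2 \le \tfrac12\|w-u_{\cal B}\|_{{\cal L}^2}^2 \quad \mbox{for all } w\in{\cal A}.
\]
Hence $u_{\cal A}$ is a nearest point of ${\cal A}$ to $u_{\cal B}$. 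By uniqueness of the metric projection onto a nonempty closed convex set, $u_{\cal A}=P_{\cal A}(u_{\cal B})$, which is \eqref{eq:proj_uB_A}.

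As a cross-check, and to tie the result to the explicit formulas, I would also verify the projection characterization directly. For an affine subspace, $u_{\cal A}=P_{\cal A}(u_{\cal B})$ holds if and only if $u_{\cal A}\in{\cal A}$ and $u_{\cal B}-u_{\cal A}=-v$ is orthogonal to the direction space ${\cal A}-{\cal A}$. Take $w=u_1-u_2$ with $u_1,u_2\in{\cal A}$, and let $\delta x$ be the corresponding state difference. Then $\delta x$ satisfies the homogeneous system $\dot{\delta x}=A\,\delta x+Bw$ together with the linearized boundary condition. Using $v=-B^T\lambda$ from \eqref{eq:gap_soln} and integrating by parts with the adjoint equation $\dot\lambda=-A^T\lambda$, we get
\[
\int_0^1\langle v,w\rangle\,dt \;=\; -\int_0^1 \langle\lambda,\dot{\delta x}-A\,\delta x\rangle\,dt \;=\; -\big[\langle\lambda,\delta x\rangle\big]_0^1 .
\]
This boundary term vanishes by the transversality conditions. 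That step is the only delicate point, because it requires the precise form of the transversality conditions, which Theorem~\ref{thm:gap&bbang} only states as ``appropriate.'' For this reason I would rely on the first, purely variational argument, which needs only the closedness and convexity of ${\cal A}$ and the optimality of the pair.
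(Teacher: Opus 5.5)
Your proposal is correct, and your main argument takes a genuinely different, more elementary route than the paper. The paper goes through the optimality conditions. It writes $u_{\cal A}=u_{\cal B}+v$, substitutes the adjoint representation $v=-B^T\lambda$ from \eqref{eq:gap_soln}, and then appeals to Theorem~3.4 of \cite{BurCalKay2024}. That result characterizes $P_{\cal A}(u)$ as $u-B^T\lambda$, with $\lambda$ solving $\dot\lambda=-A^T\lambda$ under suitable transversality conditions.

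You use only two facts. First, a joint minimizer over ${\cal A}\times{\cal B}$ also minimizes in the first variable when the second is frozen. Second, the nearest point of a convex set is unique, and the minimum here is attained by $u_{\cal A}$.

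Your route is self-contained and needs neither the hypothesis ${\cal A}\cap{\cal B}=\varnothing$ nor Theorem~\ref{thm:gap&bbang}. It works verbatim for any two nonempty closed convex sets in a Hilbert space, and by symmetry it also gives $u_{\cal B}=P_{\cal B}(u_{\cal A})$. It also avoids the paper's implicit step of identifying the adjoint (with its ``appropriate'' transversality conditions) from Theorem~\ref{thm:gap&bbang} with the one in the cited projection formula.

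What the paper's route buys is the structural link. It shows that the gap $v$ is exactly the adjoint correction term that $P_{\cal A}$ adds; for (PDI) this is the affine term $c_1t+c_2$ in \eqref{eqn:projA}. That identification is what connects the best approximation pair to the projection formulas used in the DR algorithm later.

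Your cross-check is essentially a hand-made version of the paper's route. You correctly single out the vanishing of $[\langle\lambda,\delta x\rangle]_0^1$ as the one place where the transversality conditions enter. For (PDI) this is immediate, since both endpoint states are fixed and so $\delta x(0)=\delta x(1)=0$.
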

\begin{proof}
From the definition of $v$ in~\eqref{gap_v}, we have $u_{\cal A} = u_{\cal B} + v$. Substituting next the optimal gap solution in~\eqref{eq:gap_soln}, one gets $u_{\cal A} = u_{\cal B} - B^T(t)\lambda$, where $\lambda(\cdot)$ solves\ \ $\dot{\lambda}(t) = -A^T(t)\,\lambda(t)$, with appropriate transversality conditions.  Subsequently, Theorem~3.4 and its proof in~\cite{BurCalKay2024} furnish~\eqref{eq:proj_uB_A}.
\end{proof}

One further observation is in order for the special but common case of the time-invariant linear control system $\dot{x}(t) = A\,x(t) + B\,u(t)$: if\ \ $\rank\, [\ b_i\ |\  A\,b_i\ |\  \ldots\ |\ A^{n-1}\,b_i\ ] = n$, then the best approximation control component $u_{{\cal B},i}$ for the infeasible optimal control problem is of bang--bang type as given in \eqref{uB}.

\section{A Double Integrator Problem}
\label{sec:DI}

We consider the following optimal control problem involving the double integrator as a special case of Problem~(P).
	\[
	\mbox{(PDI) }\left\{\begin{array}{rl}
		\ds\min & \ \ \ds\frac{1}{2}\int_0^1 u^2(t)\,dt =  \frac{1}{2}\,\|u\|_{{\cal L}^2}^2 \\[5mm] 
		\mbox{subject to} & \ \ \dot{x}_1(t) = x_2(t)\,,\ \ x_1(0) = s_0\,,\ \ x_1(1) = s_f\,, \\[2mm]
		& \ \ \dot{x}_2(t) = u(t)\,,\ \ \ \,x_2(0) = v_0\,,\ \ x_2(1) = v_f\,, \\[2mm]
		& \ \ |u(t)|\le a\,,\ \ \mbox{for all}\ \ t\in [0,1]\,,
	\end{array} \right.
	\]
where the bound $a>0$ and the boundary values $s_0$, $s_f$, $v_0$ and $v_f$ are specified. Problem~(PDI) was previously used in~\cite{BauBurKay2019, BurCalKayMou2024} to study applications of splitting and projection methods, but only in a feasible setting.  The above problem was also studied in~\cite{BurKayMou2024} for the infeasible case, i.e. the case where the control bound $a$ is small enough to make the constraint set infeasible.  However, no splitting and projection methods were implemented in~\cite{BurKayMou2024}.  In~\cite{BurKayMou2024}, the critically feasible case was also studied, namely the case of the smallest $a$ for which Problem~(PDI) is (still) feasible. 
In (PDI), we have $A = \begin{bmatrix}
0 & 1 \\
0 & 0
\end{bmatrix}$ and $b = \begin{bmatrix}
0 \\
1
\end{bmatrix}$.  So, $\rank\,[\,b\ |\ Ab\,] = \rank\,\begin{bmatrix}
0 & 1 \\
1 & 0
\end{bmatrix} = 2 = n$, and therefore the control system is controllable.  
Recall that the adjoint variables satisfy $\dot{\lambda}(t) = -A^T\lambda(t)$, i.e. $\dot{\lambda}_1(t) = 0$ and $\dot{\lambda}_2(t) = -\lambda_1$, with no boundary (or transversality) conditions imposed in this particular case.  It follows that $\lambda_1(t) = c_1$ and $\lambda_2(t) = -c_1\,t - c_2$, for all $t\in[0,1]$, where $c_1$ and $c_2$ are real constants.  So, by Equation~\eqref{eq:gap_soln} in Theorem~\ref{thm:gap&bbang}, the gap function is simply given by
\[
v(t) = c_1\,t + c_2\,,
\]
for all $t\in[0,1]$.  Consequently,
\[
u_{\cal A}(t) = u_{\cal B}(t) + c_1\,t + c_2\,.
\]
Combining the above fact with \eqref{uB} in Theorem \ref{thm:gap&bbang} we obtain
\begin{equation}  \label{uB_DI}
	u_{\cal B}(t) = \left\{\begin{array}{rl}
	a\,, &\ \ \mbox{if\ \ } c_1\,t + c_2 \ge 0\,, \\[1mm]
	-a\,, &\ \ \mbox{if\ \ } c_1\,t + c_2 < 0\,,
	\end{array} \right.
\end{equation} 
for a.e.\ $t\in[0,1]$.

\begin{remark}[Types of Control According to the Gap Function]  \label{rem:uB_DI}  \rm
The gap function\linebreak $v(t) = c_1\,t + c_2$ is affine and thus can obviously change sign at most once.  So, $u_{\cal B}$ is constant or piecewise-constant, depending on the values of $c_1$ and $c_2$, as elaborated in the following.
\begin{itemize}
\item[(A)] $c_1 = 0$ or $c_2 = 0$\,: For all $t\in[0,1]$, $u_{\cal B}(t) = \sgn(c_1)\,a$ if $c_2 = 0$ and $u_{\cal B}(t) = \sgn(c_2)\,a$ if $c_1 = 0$.  In other words, $u_{\cal B}(t) = a$ or $-a$, for all $t\in[0,1]$.  
\item[(B)] $c_1 \neq 0$ and $c_2 \neq 0$\,: We have two subcases to examine.  
\begin{itemize}
\item[(I)] $c_1$ and $c_2$ have the same sign:  Then $u_{\cal B}(t) = \sgn(c_2)\,a$, i.e., $u_{\cal B}(t) = a$ or $-a$, for all $t\in[0,1]$.  

\item[(II)] $c_1$ and $c_2$ have opposite signs:
\begin{itemize}
\item[(i)] $v(0) = c_2$ and $v(1) = c_1 + c_2$ have the same sign: Then $u_{\cal B}(t) = \sgn(c_2)\,a$, i.e., $u_{\cal B}(t) = a$ or $-a$, for all $t\in[0,1]$.
\item[(ii)] $v(0) = c_2$ and $v(1) = c_1 + c_2$ have opposite signs:  Then, by \eqref{uB_DI},
\begin{equation}  \label{uB_DI_a}
	u_{\cal B}(t) = \left\{\begin{array}{rl}
	r\,, &\ \ \mbox{if\ \ } 0\le t < t_s\,, \\[1mm]
	-r\,, &\ \ \mbox{if\ \ } t_s\le t \le 1\,,
	\end{array} \right.
\end{equation}
where
\[
r = \sgn(v(0))\,a = \sgn(c_2)\,a\,.
\]
In~\eqref{uB_DI_a}, the value of $t_s\in(0,1]$ such that $v(t_s) = 0$ is called the {\em switching time}.  When $v(1) = 0$ we set $t_s = 1$ in~\eqref{uB_DI_a}.  Although the value of $u_{\cal B}$ does not change when $t_s = 1$, we still refer to $t_s = 1$ here (with $v(1) = 0$) as a switching time.  On the other hand, we cannot set $t_s = 0$, as then $v(0) = 0$ implies that $c_2 = 0$, which contradicts the condition that $c_1$ and $c_2$ are nonzero.
\proofbox
\end{itemize}
\end{itemize}
\end{itemize}

\end{remark}
\begin{remark}[\boldmath{Setting $t_s = 1$} in Certain Cases]  \label{rem:uB_DI_ts=1}  \rm
Although $\sgn(c_1) = \sgn(c_2)$ in Remark~\ref{rem:uB_DI} (B)(I), $u_{\cal B}$ will still be the same constant {over the whole interval $[0,1]$} if we set $c_1 := -c_2$, as a result of which, $v(1) = 0$, or $t_s = 1$.  We also note, in the case of (B)(II)(i), that $u_{\cal B}$ will remain {to be} the same {constant}, if we again set $c_1 := -c_2$, and so $v(1) = 0$, or $t_s = 1$.  Therefore, in both Remark~\ref{rem:uB_DI}(B)(I) and (B)(II)(i), $u_{\cal B}$ can be represented by~\eqref{uB_DI_a} with $t_s = 1$.  This will be used in the proof of Theorem~\ref{DI_infeas_control} for the case where $c_1\neq 0\neq c_2$.
\proofbox
\end{remark}

Let $a_c$ be the critical value of $a$ such that Problem~(PDI) is feasible when $a = a_c$ but infeasible when $a < a_c$.  Therefore, when $a = a_c$ Problem~(PDI) is said to be {\em critically feasible}.  The critical value $a_c$ is expressed in~\cite[Theorem~5]{BurKayMou2024}, by cases, where $t_c$ stands for the (critical) switching time, as follows.
\begin{fact}[Theorem~5 in~\cite{BurKayMou2024}] \label{fact_ac}
The critical value $a_c$ for Problem~(PDI) is given in one of the following two cases.
\begin{itemize}
\item[(a)]  $s_f - s_0 \neq (v_0 + v_f)/2$\,:
\begin{itemize}
\item[(i)] $v_0 \neq v_f$\,: $a_c = \ds\left|\frac{v_f - v_0}{2 t_c - 1}\right|$\,, \\ where $t_c$ solves\ \ $\ds (v_f - v_0)\,t_c^2 + 2\,(s_f - s_0 - v_f)\, t_c + \frac{1}{2}\,(v_0 + v_f) - (s_f - s_0) = 0$\,.

\item[(ii)] $v_0 = v_f$\,:  $a_c = 4\,|v_f + s_0 - s_f|$.
\end{itemize}
\item[(b)]  $s_f - s_0 = (v_0 + v_f)/2$\,: $a_c = |v_f - v_0|$, with $v_0 \neq v_f$.
\end{itemize}
\end{fact}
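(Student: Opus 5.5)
We would prove this statement by reformulating critical feasibility as a minimum sup-norm moment problem. Integrating the dynamics of (PDI) shows that $u$ satisfies the boundary conditions if and only if
\[
\int_0^1 u(t)\,dt = v_f - v_0 =: \alpha\,, \qquad \int_0^1 (1-t)\,u(t)\,dt = s_f - s_0 - v_0 =: \beta\,.
\]
Hence (PDI) is feasible exactly when $a \ge \min\{\|u\|_{{\cal L}^\infty} : u \text{ satisfies both moment conditions}\}$. The critical value $a_c$ is this minimum.

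\textbf{Step 1: the minimizer is bang--bang with at most one switch.} We would establish this by ${\cal L}^1$--${\cal L}^\infty$ duality:
\[
a_c = \max\Big\{ p\alpha + q\beta \;:\; \int_0^1 |p + q(1-t)|\,dt \le 1 \Big\}\,.
\]
The maximum exists by compactness. At optimal multipliers, the alignment condition forces $u(t) = a_c \sgn(p + q(1-t))$ wherever this affine function is nonzero, and it vanishes at most at one point. Thus an extremal $u$ equals $r$ on $[0,t_c)$ and $-r$ on $[t_c,1]$, with $|r| = a_c$ and $t_c \in [0,1]$, where $t_c \in \{0,1\}$ covers the constant case. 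This mirrors the sign structure already exploited in Remark~\ref{rem:uB_DI}. This step is the main obstacle. If the duality argument proves awkward, we would instead argue directly: any $u$ meeting both moment conditions with $\|u\|_\infty \le a$ can be rearranged, by moving positive mass toward earlier times, into a single-switch control with the same first moment and an extremal second moment. The reachable set in the $(\alpha,\beta)$-plane for bound $a$ is then a convex set whose boundary consists of these single-switch controls, so $a_c$ is attained on that boundary.

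\textbf{Step 2: compute the two moments of the single-switch control.} A direct computation gives
\[
\alpha = r(2t_c - 1)\,, \qquad \beta = r\Big(t_c - \tfrac{t_c^2}{2} - \tfrac{(1-t_c)^2}{2}\Big) = r\Big(-t_c^2 + 2t_c - \tfrac12\Big)\,.
\]
Eliminating $r$ yields
\[
\alpha t_c^2 + 2(\beta - \alpha)\,t_c + \tfrac{\alpha}{2} - \beta = 0\,.
\]
Since $\beta - \alpha = s_f - s_0 - v_f$ and $\tfrac{\alpha}{2} - \beta = \tfrac12(v_0 + v_f) - (s_f - s_0)$, this is precisely the quadratic in (a)(i).

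\textbf{Step 3: case analysis.}
\begin{itemize}
\item[(a)(i)] If $v_0 \neq v_f$ and $2t_c \neq 1$, then $a_c = |r| = |\alpha/(2t_c - 1)|$. Here we must select the root $t_c \in [0,1]$ that is consistent with the second moment equation. Both equations are satisfied by construction, and minimality follows from Step~1.
\item[(a)(ii)] If $v_0 = v_f$, then $\alpha = 0$ forces $t_c = \tfrac12$. The second equation gives $\beta = r/4$, so $a_c = 4|\beta| = 4|v_f + s_0 - s_f|$.
\item[(b)] If $\beta = \alpha/2$, the quadratic reduces to $\alpha\, t_c(t_c - 1) = 0$, so $t_c \in \{0,1\}$. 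The control is then constant, which gives $a_c = |\alpha| = |v_f - v_0|$. The requirement $v_0 \neq v_f$ excludes the trivial case $u \equiv 0$.
\end{itemize}
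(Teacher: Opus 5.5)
The paper does not prove this statement; it imports it from \cite{BurKayMou2024}. The nearest in-paper argument is the proof of Theorem~\ref{DI_infeas_control}. There the one-switch bang--bang structure comes from the optimal-control machinery of Theorem~\ref{thm:gap&bbang} (affine adjoint/gap function plus controllability), followed by piecewise integration of the state equations and matching at the switching time.

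Your proposal is correct and takes a genuinely different, more self-contained route. Recasting feasibility as the moment conditions $\int_0^1 u=\alpha$, $\int_0^1(1-t)u=\beta$ makes $a_c$ the gauge of $(\alpha,\beta)$ with respect to the compact convex set $K=\{(\int u,\int(1-t)u):\|u\|_{{\cal L}^\infty}\le 1\}$. Its support function is $(p,q)\mapsto\int_0^1|p+q(1-t)|\,dt$, which gives your duality formula with no gap. Alignment at a dual optimum then delivers the one-switch structure and minimality together, without Pontryagin-type conditions or controllability. Your moment formulas and the elimination of $r$ check out and reproduce exactly the quadratic in (a)(i). The paper's route generalizes through Theorem~\ref{thm:gap&bbang} and also produces the adjoint. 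Yours is finite-dimensional and makes the root selection transparent.

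Three points to tighten.
\begin{enumerate}
\item \textbf{Root selection in (a)(i).} The phrase ``the root consistent with the second moment equation'' does not discriminate: every root $t\neq\frac12$ of $Q(t)=\alpha t^2+2(\beta-\alpha)t+\frac{\alpha}{2}-\beta$, with $r=\alpha/(2t-1)$, satisfies both moment equations. What you need is $Q(0)=\frac{\alpha}{2}-\beta=-Q(1)\neq 0$ in case (a). Hence $Q$ has exactly one root in $(0,1)$ and none at $0$ or $1$, and the Step~1 minimizer must use that root. The other root lies outside $[0,1]$ and gives a wrong value; in the paper's case study, $-1/\sqrt{2}$ would give $\sqrt{2}-1$ instead of $1+\sqrt{2}$. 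This also sharpens the statement as quoted, which does not say which root to take.
\item \textbf{Case (a)(ii).} $\alpha=0$ only forces $r=0$ or $t_c=\frac12$. Exclude $r=0$ because it would give $\beta=0$, contradicting case (a).
\item \textbf{Primal attainment.} Record that a minimizer exists, by weak-$*$ compactness of the ${\cal L}^\infty$ ball, before you invoke alignment.
\end{enumerate}
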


\begin{theorem}[Best Approximation Solution to Infeasible Problem~(PDI)] \label{DI_infeas_control} \ 
Suppose that $a < a_c$, with the critical feasibility value $a_c$ as given in Fact~\ref{fact_ac}.
\begin{enumerate}
\item[(a)]  If $s_f - s_0 \neq (v_0 + v_f)/2$, then the best approximation control is given by
\begin{equation}  \label{uB_DI1}
	u_{\cal B}(t) = \left\{\begin{array}{rl}
	r\,, &\ \ \mbox{if\ \ } 0\le t < t_s\,, \\[1mm]
	-r\,, &\ \ \mbox{if\ \ } t_s\le t \le 1\,,
	\end{array} \right.
\end{equation}
with $r$ and the switching time $t_s$ given in the following two cases.
\begin{enumerate}
\item[(i)] $v_0 \neq v_f$\,:
\begin{equation}  \label{r1}
	r = -\sgn(c_1)\,a\,
\end{equation}
and $c_1$ and $t_s$ solve the following system of equations.
\begin{eqnarray}  \label{c1_ts}
&& (2r - c_1)\,t_s + \frac{1}{2}\,c_1 - v_f + v_0 - r = 0\,, \label{eqn1}\\
&& (r - c_1)\,t_s^2 + (v_0 - v_f + c_1 - r)\,t_s - \frac{1}{3}\,c_1 + \frac{1}{2}\,r +v_f + s_0 - s_f = 0\,.  \label{eqn2}
\end{eqnarray}
\item[(ii)] $v_0 = v_f$\,:
\begin{equation}  \label{r2}
	r = \left\{\begin{array}{rl}
	a\,, & \ \mbox{if\ \ } 4(v_f + s_0 - s_f) < -a\,, \\[1mm]
	-a\,, & \ \mbox{if\ \ } 4(v_f + s_0 - s_f) > a\,,
	\end{array} \right.
	\quad\mbox{and}\quad t_s = \frac{1}{2}\,.
\end{equation}
\end{enumerate}
			
\item[(b)]  If $s_f - s_0 = (v_0 + v_f)/2$, then $v_0 \neq v_f$ and the best approximation control is constant and given by
\begin{equation}  \label{uB_DI2}
	u_{\cal B}(t) = \left\{\begin{array}{rl}
	a\,, & \ \mbox{if\ \ } v_f - v_0 > a\,, \\[1mm]
	-a\,, & \ \mbox{if\ \ } v_f - v_0 < -a\,,
\end{array} \right.
\end{equation}
for all $t\in [0,1]$.
\end{enumerate}
\end{theorem}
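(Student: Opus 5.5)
We take Theorem~\ref{thm:gap&bbang} and Remarks~\ref{rem:uB_DI}--\ref{rem:uB_DI_ts=1} as the starting point. They give the structure of the best approximation pair: $v(t)=c_1t+c_2$, $u_{\cal B}$ is of the form \eqref{uB_DI_a} with $r=\sgn(c_2)\,a$, and $t_s\in(0,1]$. When $c_1\neq0\neq c_2$ we have, after the normalisation of Remark~\ref{rem:uB_DI_ts=1}, $v(t_s)=0$, i.e.\ $c_2=-c_1t_s$. Hence $\sgn(c_2)=-\sgn(c_1)$, which gives \eqref{r1}. The remaining unknowns are then $(c_1,t_s)$, and these are fixed by requiring $u_{\cal A}=u_{\cal B}+v\in{\cal A}$.

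\textbf{Step 1: the two boundary conditions.} Integrate the double integrator with $u=u_{\cal A}$. The velocity condition is $v_f-v_0=\int_0^1 u_{\cal A}\,dt=r t_s - r(1-t_s)+c_1/2+c_2$. Substituting $c_2=-c_1t_s$ yields \eqref{eqn1}. The position condition is $s_f-s_0=v_0+\int_0^1(1-t)\,u_{\cal A}(t)\,dt$. Compute the bang part piecewise, $\int_0^{t_s}(1-t)\,r\,dt-\int_{t_s}^1(1-t)\,r\,dt$, and the affine part $\int_0^1(1-t)(c_1t+c_2)\,dt=c_1/6+c_2/2$. Substituting $c_2=-c_1t_s$ and using \eqref{eqn1} to rewrite the linear terms should give \eqref{eqn2}, up to an algebraic rearrangement that we must match exactly. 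Since (Pf) is convex, these necessary conditions are also sufficient. Existence of a best approximation pair then means that any solution of \eqref{eqn1}--\eqref{eqn2} consistent with the sign pattern is the best approximation.

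\textbf{Step 2: the degenerate configurations.} We have to decide when $c_1=0$ or $c_2=0$ (case (A)), or when $u_{\cal B}$ is constant, and show that these situations are covered. Suppose $c_1=0$. Then $v\equiv c_2$ and $u_{\cal B}\equiv\pm a$, and both boundary conditions are linear in $c_2$. Eliminating $c_2$ forces $s_f-s_0=(v_0+v_f)/2$. So in case (a) we have $c_1\neq0$. In case (b), conversely, the constant-$u$ candidate with $c_1=0$ satisfies both conditions with $c_2=v_f-v_0\mp a$. The sign consistency condition $\sgn(c_2)=\pm1$ together with $a<a_c=|v_f-v_0|$ (Fact~\ref{fact_ac}(b)) gives \eqref{uB_DI2}; $v_0\neq v_f$ holds because otherwise $a_c=0$. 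In case (a), if $c_2=0$ then $v=c_1t$ and $u_{\cal B}$ is constant, which is exactly the representation with $t_s=1$ from Remark~\ref{rem:uB_DI_ts=1}; it is therefore included in (i).

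\textbf{Step 3: the symmetric case.} For (a)(ii), set $v_0=v_f$. Equation \eqref{eqn1} becomes $(2r-c_1)(t_s-\tfrac12)=0$. The root $c_1=2r$ is incompatible with $r=-\sgn(c_1)a$, since $a>0$. Therefore $t_s=\tfrac12$. Plugging this into \eqref{eqn2} gives $c_1$ linearly, and the sign requirement $\sgn(c_1)=-\sgn(r)$, combined with $a<a_c=4|v_f+s_0-s_f|$, should reproduce the conditions in \eqref{r2}.

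\textbf{Main obstacle.} The hardest part is the converse in case (a)(i). We must show that the system \eqref{eqn1}--\eqref{eqn2} admits a solution with $t_s\in(0,1]$ and $\sgn(c_1)=-\sgn(r)$, and that infeasibility ($a<a_c$) excludes the spurious sign choices. The plan is as follows. Use \eqref{eqn1} to express $c_1$ as a rational function of $t_s$, namely $c_1=(v_f-v_0+r-2rt_s)/(\tfrac12-t_s)$, and substitute it into \eqref{eqn2} to get a single equation in $t_s$. Then compare this equation with the quadratic for $t_c$ in Fact~\ref{fact_ac}(a)(i), which is recovered when $c_1=0$ and $a=a_c$. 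Finally, argue by continuity in $a$ (or by the intermediate value theorem on $(0,1]$) that for $a<a_c$ a root exists with the correct sign. Existence of the best approximation pair guarantees a solution in any case, so the real task is to check that the sign of $c_1$ is consistent with the choice $r=-\sgn(c_1)a$.
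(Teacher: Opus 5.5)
\textbf{Verdict.} Your route matches the paper's through Steps 1--3: the structure from the bang--bang theorem and Remarks~\ref{rem:uB_DI}--\ref{rem:uB_DI_ts=1}, the boundary conditions with $c_2=-c_1t_s$, the degenerate cases, and $v_0=v_f$. But both your proof and the paper's contain the same genuine gap, and in that subcase the statement itself fails.

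\textbf{What is correct.} Step~1 is sound. \eqref{eqn1} is the velocity condition. \eqref{eqn2} equals your position condition $-rt_s^2+2rt_s-\tfrac12r+\tfrac16c_1-\tfrac12c_1t_s+v_0+s_0-s_f=0$ plus $(t_s-1)$ times \eqref{eqn1}. Your sufficiency remark is also correct, and the paper lacks it: an affine $v$ gives $u_{\cal A}=P_{\cal A}(u_{\cal B})$, the sign pattern gives $u_{\cal B}=P_{\cal B}(u_{\cal A})$, and joint convexity then makes the pair a best approximation pair.

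\textbf{The gap.} The problem is the sentence ``after the normalisation of Remark~\ref{rem:uB_DI_ts=1}, $c_2=-c_1t_s$''. Suppose $c_1c_2\neq0$ but $v$ has no zero in $[0,1]$. Resetting the coefficients so that $c_1+c_2=0$ changes $v$, hence changes $u_{\cal A}=u_{\cal B}+v$. The modified control in general violates the boundary conditions, so Step~1 says nothing about it.

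The paper's proof makes the same move. In its last paragraph, \eqref{eqn:x2d} forces $r=\tfrac12c_1+v_f-v_0$ with $c_1=6(v_0+v_f-2(s_f-s_0))$, and this need not equal $\pm a$.

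\textbf{The step cannot be repaired.} Take $s_0=v_0=0$, $s_f=51$, $v_f=100$, $a=1$.
\begin{itemize}
\item The controls $u_{\cal B}\equiv1$ and $u_{\cal A}(t)=106-12t$ satisfy $\int_0^1u_{\cal A}=100$ and $\int_0^1(1-t)u_{\cal A}=51$, so $u_{\cal A}\in{\cal A}$.
\item Their gap is $v(t)=105-12t>0$, so they form the best approximation pair.
\item Suppose some $(c_1,t_s)$ with $t_s\in[0,1]$ and $r=-\sgn(c_1)a$ solved \eqref{eqn1}--\eqref{eqn2}. Your own sufficiency argument would then give a best approximation pair with gap $c_1(t-t_s)$.
\item Uniqueness of the gap vector would force $c_1=-12$ and $t_s=8.75$, a contradiction.
\end{itemize}
So in this subcase, part (a)(i) needs a constant alternative. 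It is $u_{\cal B}\equiv r$ with $c_1=6(v_0+v_f-2(s_f-s_0))$ and $c_2=v_f-v_0-r-\tfrac12c_1$, valid whenever this $v$ has the sign of $r$ throughout $[0,1]$. For $v_0=v_f$ this alternative is impossible, so your Step~3 stands.

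\textbf{Your ``main obstacle'' plan.} The same example shows that continuity in $a$ or an intermediate-value argument for an admissible root cannot work in general. Where the statement is true, the plan is unnecessary. A best approximation pair exists, since ${\cal B}$ is weakly compact and ${\cal A}$ is closed. The relation $r=-\sgn(c_1)a$ is just $u_{\cal B}=P_{\cal B}(u_{\cal A})$. So necessity already yields an admissible solution whenever $v$ vanishes somewhere in $[0,1]$.

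\textbf{Two smaller corrections.}
\begin{itemize}
\item The case $c_2=0$ must use $t_s=0$ with $r=-\sgn(c_1)a$, as in the paper's Case~(I), not $t_s=1$. By the same uniqueness argument, a solution at $t_s=1$ would have gap $\tilde c_1(t-1)\neq c_1t$. Your range must therefore be $t_s\in[0,1]$, not $(0,1]$.
\item To assert ``the'' best approximation control, for instance in part~(b) where you only verify a candidate, add uniqueness. The gap vector is unique and $v\neq0$ a.e., so $u_{\cal B}=P_{\cal B}(u_{\cal B}+v)$ determines $u_{\cal B}$ a.e.
\end{itemize}
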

\begin{proof}
Since $a < a_c$, i.e. ${\cal A} \cap {\cal B} = \varnothing$, the gap function $v \neq 0$, i.e. $v(t) = c_1\,t + c_2 \neq 0$ for some $t\in[0,1]$.  Therefore, we exclude the case where $c_1 = c_2 = 0$ and examine the following three cases. \\[-3mm]
		
\centerline{(I) $c_1 \neq 0$ and $c_2 = 0$;\ \ \ (II) $c_1 = 0$ and $c_2 \neq 0$;\ \ \ and\ \ \ (III) $c_1 \neq 0$ and $c_2 \neq 0$.}
		
\noindent
Case~(I): Suppose that $c_1 \neq 0$ and $c_2 = 0$.  Let $r = -\sgn(c_1)\,a$.  Then, from~\eqref{uB_DI},
\begin{equation} \label{uB_case_I}
u_{\cal B}(t) = -r\,, \mbox{ for all } t\in[0,1]\,,
\end{equation}
and so $u_{\cal A}(t) = -r + c_1\,t$, for all $t\in[0,1]$.
The state equations of Problem~(PDI) with this $u_{\cal A}(t)$ substituted, namely $\dot{x}_2(t) = -r + c_1\,t$ and $\dot{x}_1(t) = x_2(t)$, produce the solutions $x_2(t) = v_0 - r\,t + c_1\,t^2/2$ and $x_1(t) = s_0 + v_0\,t - r\,t^2/2 + c_1\,t^3/6$.  Subsequently, by imposing the boundary conditions $x_2(1) = v_f$ and $x_1(1) = s_f$, and re-arranging, we obtain the equations
\begin{eqnarray}
r &=&  \frac{1}{2}\,c_1 - v_f + v_0 \,,  \label{eqn1_case_I} \\
s_0 - s_f &=& \frac{1}{2}\,r - v_0 - \frac{1}{6}\,c_1\,.  \label{eqn2_case_I}
\end{eqnarray}
The substitution of $r$ in \eqref{eqn1_case_I} into \eqref{eqn2_case_I} and manipulations result in
\begin{equation} \label{eq:c1}
c_1 = 6\,(v_0 + v_f - 2\,(s_f - s_0))\,.
\end{equation}
Since $c_1 \neq 0$, we must have from \eqref{eq:c1} that $s_f - s_0 \neq (v_0 + v_f)/2$, which is the hypothesis posed in part~(a) of the theorem.  Since $r = -\sgn(c_1)\,a$, $c_1$ can also be solved directly from \eqref{eqn1_case_I} as
\begin{equation}  \label{c1_a}
	c_1 = \left\{\begin{array}{rl}
	2\,(v_f - v_0 - a)\,, & \ \mbox{if\ \ } c_1 > 0\,, \\[1mm]
	2\,(v_f - v_0 + a)\,, & \ \mbox{if\ \ } c_1 < 0\,,
	\end{array} \right.
\end{equation}
i.e.,
\begin{equation}  \label{c1_a2}
	c_1 = \left\{\begin{array}{rl}
	2\,(v_f - v_0 - a)\,, & \ \mbox{if\ \ } v_f - v_0 > a\,, \\[1mm]
	2\,(v_f - v_0 + a)\,, & \ \mbox{if\ \ } v_f - v_0 < -a\,,
	\end{array} \right.
\end{equation}
which implies that $a < |v_f - v_0|$ and that $v_0 \neq v_f$, which is the case in part~(a)(i).  Since $v(0) = 0$, we set $t_s = 0$ and observe that $u_{\cal B}(t)$ in~\eqref{uB_case_I} verifies \eqref{uB_DI1} for $t_s=0$.  Furthermore, with $t_s = 0$, Equations~\eqref{eqn1}--\eqref{eqn2} can be shown (after straightforward algebraic manipulations) to reduce to Equations~\eqref{eqn1_case_I}--\eqref{eqn2_case_I}.  So, we have shown that Case~(I) corresponds to and verifies part~(a)(i) of the theorem.
	
\noindent
Case~(II): Suppose that $c_1 = 0$ and $c_2 \neq 0$.  Then, from~\eqref{uB_DI}, $u_{\cal B}(t) = r$, where $r = \sgn(c_2)\,a$, and thus $u_{\cal A}(t) = r + c_2$, for all $t\in[0,1]$.  With substitution of $u_{\cal B}(t)$, the state equations become $\dot{x}_2(t) = r + c_2$ and $\dot{x}_1(t) = x_2(t)$, the solution of which results in $x_2(t) = v_0 + (r + c_2)\,t$ and $x_1(t) = s_0 + v_0\,t + (r + c_2)\,t^2/2$.  Applying the boundary conditions $x_2(1) = v_f$ and $x_1(1) = s_f$ one gets the equations
\begin{eqnarray}
r &=&  -c_2 + v_f - v_0 \,,  \label{eqn1_case_II} \\
s_f - s_0 &=& \frac{1}{2}\,(r + c_2) + v_0\,.  \label{eqn2_case_II}
\end{eqnarray}
Substituting $r$ in \eqref{eqn1_case_II} into \eqref{eqn2_case_II} and rearranging, we get the condition $s_f - s_0 = (v_f + v_0)/2$ given in part (b) of the theorem.  Since $r = \sgn(c_2)\,a$, $c_2$ can be solved directly from \eqref{eqn1_case_II} for $v_0 \neq v_f$ as
\begin{equation}  \label{c1_a3}
	c_2 = \left\{\begin{array}{rl}
	v_f - v_0 - a\,, & \ \mbox{if\ \ } v_f - v_0 > a\,, \\[1mm]
	v_f - v_0 + a\,, & \ \mbox{if\ \ } v_f - v_0 < -a\,,
	\end{array} \right.
\end{equation}
which implies that $a < a_c = |v_f - v_0|$.  Since $u_{\cal B}(t) = \sgn(c_2)\,a$, the expression in~\eqref{uB_DI2} follows.  In this case, one cannot have $v_0 = v_f$ as then \eqref{eqn1_case_II} reduces to $\sgn(c_2)\,a = -c_2$, or $|c_2| = -a < 0$, which is not possible.  So, we have shown that Case~(II) corresponds to and verifies part~(b) of the theorem.
		
\noindent
Case~(III): Finally, suppose that $c_1 \neq 0$ and $c_2 \neq 0$.  By Remark~\ref{rem:uB_DI}, case (B),  we have only two possible structures for the solution {(also see Remark~\ref{rem:uB_DI_ts=1})}: the first is the one given by~\eqref{uB_DI_a} in Remark~\ref{rem:uB_DI}(B)(I) or Remark~\ref{rem:uB_DI}(B)(II)(i), and the second is $u_{\cal B}(t) = \sgn(c_2)\,a$, for all $t\in[0,1]$. 

Let us consider the first solution, i.e., $u_{\cal B}(t)$ in~\eqref{uB_DI_a}.  Substitute $u(t) = u_{\cal A}(t) = u_{\cal B}(t) + c_1\,t + c_2$, into the differential equations in Problem~(PDI) to get
\begin{equation}  \label{x2_ODE}
	\dot{x}_2(t) = \left\{\begin{array}{rl}
	r + c_1\,t + c_2\,, & \ \mbox{if\ \ } 0\le t < t_s\,, \\[1mm]
	-r + c_1\,t + c_2\,, & \ \mbox{if\ \ } t_s \le t \le 1\,,
	\end{array} \right.
\end{equation}
for $t_s\in(0,1]$, where $r = \sgn(c_2)\,a$, and $\dot{x}_1(t) = x_2(t)$.  
\noindent

For $0\le t < t_s$, straightforward integration of the state equations with the initial conditions $x_2(0) = v_0$ and $x_1(0) = s_0$ gives the solution
\begin{eqnarray*}
&& x_2(t) = \frac{1}{2}\,c_1\,t^2 + (r+c_2)\,t + v_0\,, \\ 
&& x_1(t) = \frac{1}{6}\,c_1\,t^3 + \frac{1}{2}\,(r+c_2)\,t^2 + v_0\,t + s_0\,.
\end{eqnarray*}
For $t_s\le t \le 1$ and the terminal values $x_2(1) = v_f$ and $x_1(1) = s_f$, the solution of the state equations, after some lengthy algebraic manipulations, results in
\begin{eqnarray*}
&& x_2(t) = \frac{1}{2}\,c_1\,(t^2 - 1) + (r-c_2)\,(1 - t) + v_f\,, \\ 
&& x_1(t) = \frac{1}{6}\,c_1\,(t + 2)\,(1 - t)^2 - \frac{1}{2}\,(r-c_2)\,(1 - t)^2 - v_f\,(1 - t) + s_f\,.
\end{eqnarray*}
Since $x_i$ are continuous, $\lim_{t\to t_s^-} x_i(t) = \lim_{t\to t_s^+} x_i(t)$, $i = 1,2$.  In other words,
\[
	\frac{1}{2}\,c_1\,t_s^2 + (r+c_2)\,t_s + v_0 = \frac{1}{2}\,c_1\,(t_s^2 - 1) + (r-c_2)\,(1 - t_s) + v_f\,,
\]
\[
	\frac{1}{6}\,c_1\,t_s^3 + \frac{1}{2}\,(r+c_2)\,t_s^2 + v_0\,t_s + s_0 = \frac{1}{6}\,c_1\,(t _s+ 2)\,(1 - t_s)^2 - \frac{1}{2}\,(r-c_2)\,(1 - t_s)^2 - v_f\,(1 - t_s) + s_f\,.
\]
Cancellations and re-arrangements simplify these equations to
\begin{equation}  \label{eqn:x2a}
	2\,r\,t_s = -\frac{1}{2}\,c_1 + r - c_2 + v_f - v_0\,,
\end{equation}
\begin{equation}  \label{eqn:x1a}
	r\,t_s^2 = \frac{1}{6}\,c_1\,(2 - 3\,t_s) - \frac{1}{2}\,r\,(1 - 2\,t_s) + \frac{1}{2}\,c_2\,(1 - 2\,t_s) + (v_f - v_0)\,t_s + s_f - s_0 - v_f\,.
\end{equation}

By definition, at a switching time $t_s$, one has $v(t_s) = c_1\,t_s + c_2 = 0$, i.e., $c_2 = -c_1\,t_s$, which can be used to eliminate $c_2$ from \eqref{eqn:x2a}--\eqref{eqn:x1a}.  Then, further algebraic manipulations give rise to~\eqref{eqn1}--\eqref{eqn2}.  We also note that $r = \sgn(c_2)\,a = -\sgn(c_1)\,a$, as in part~(a)(i) of the theorem.  

Consider the special case where $v_0 = v_f$, for which \eqref{eqn:x2a} becomes
\begin{equation*}  \label{eq:v0=vf}
(2\,r - c_1) \left(t_s - \frac{1}{2}\right) = 0\,.
\end{equation*}
Here, $(2\,r - c_1) = (-2\,\sgn(c_1)\,a - c_1) \neq 0$ since $a > 0$.  Therefore $t_s = 1/2$.  Substituting this into~\eqref{eqn:x1a}, direct manipulations yield $c_1 = -3\,\sgn(c_1)\,a + 12\,(v_f + s_0 - s_f)$, i.e.,
\begin{equation}  \label{c2_a}
	c_1 = \left\{\begin{array}{rl}
	-3\,a + 12\,(v_f + s_0 - s_f)\,, & \ \mbox{if\ \ } c_1 > 0\,, \\[1mm]
	3\,a + 12\,(v_f + s_0 - s_f)\,, & \ \mbox{if\ \ } c_1 < 0\,.
	\end{array} \right.
\end{equation}
In other words, since $r = -\sgn(c_1)\,a$, we ultimately get, for the case where $v_0 = v_f$,
\begin{equation*}
	r = \left\{\begin{array}{rl}
	a\,, & \ \mbox{if\ \ } 4(v_f + s_0 - s_f) < -a\,, \\[1mm]
	-a\,, & \ \mbox{if\ \ } 4(v_f + s_0 - s_f) > a\,,
	\end{array} \right.
	\quad\mbox{and}\quad t_s = \frac{1}{2}\,.
\end{equation*}
which is the result required in part~(a)(ii) of the theorem.

Now consider the second solution, given by Remark~\ref{rem:uB_DI}(B)(I) and Remark~\ref{rem:uB_DI}(B)(II)(i), i.e. $u_{\cal B}(t) = r$, where $r := \sgn(c_2)\,a$,  for all $t\in[0,1]$.  Then $u_{\cal A}(t) = r + c_1\,t + c_2$, and so $\dot{x}_2(t) = r + c_1\,t + c_2$ and $\dot{x}_1(t) = x_2(t)$, resulting in $x_2(t) = v_0 + (r + c_2)\,t + c_1\,t^2/2$ and $x_1(t) = s_0 + v_0\,t + (r + c_2)\,t^2/2 + c_1\,t^3/6$.  Using $x_2(1) = v_f$ and $x_1(1) = s_f$ one gets
\begin{eqnarray}
&& r + c_2 = -\frac{1}{2}\,c_1 + v_f - v_0\,, \label{eqn:x2c} \\ 
&& s_f - s_0 = \frac{1}{6}\,c_1 + \frac{1}{2}\,(r+c_2) + v_0\,.  \label{eqn:x1c}
\end{eqnarray}
Substituting $r + c_2$ from \eqref{eqn:x2c} into \eqref{eqn:x1c} and rearranging, we get $c_1 = 6\,(v_0 + v_f - 2\,(s_f - s_0))$.  Since $c_1 \neq 0$,  $s_f - s_0 \neq (v_0 + v_f )/2$, which is the condition given for part~(a) of the theorem.  By Remark~\ref{rem:uB_DI_ts=1}, one can set $c_2 := -c_1$ and observe that one still gets $c_1 = 6\,(v_0 + v_f - 2\,(s_f - s_0))$.  The assignment $c_2 := -c_1$ also implies $\sgn(c_2) = -\sgn(c_1)$ and $t_s = 1$.  Then \eqref{eqn:x2c}--\eqref{eqn:x1c} become
\begin{eqnarray}
&& r = \frac{1}{2}\,c_1 + v_f - v_0\,, \label{eqn:x2d} \\ 
&& s_f - s_0 = -\frac{1}{3}\,c_1 + \frac{1}{2}\,r + v_0\,.  \label{eqn:x1d}
\end{eqnarray}
Note that the solution for $c_1$ from \eqref{eqn:x2d}--\eqref{eqn:x1d} is the same, i.e., $c_1 = 6\,(v_0 + v_f - 2\,(s_f - s_0))$, as that obtained from \eqref{eqn:x2c}--\eqref{eqn:x1c}.  Moreover, with $t_s = 1$ substituted, Equations~\eqref{eqn1}--\eqref{eqn2} reduce to Equations~\eqref{eqn:x2d}--\eqref{eqn:x1d}.  So, we have shown that Case~(III) corresponds to and verifies part~(a)(i) of the theorem.
\end{proof}

It is interesting to know if the value of the switching time $t_s$ converges to some limit as $a$ gets really small.  The following result informs us of that.

\begin{corollary}[Switching Time as \boldmath{$a \to 0$}]  \label{cor:a=0} \ \\
Suppose that $s_f - s_0 \neq (v_0 + v_f )/2$.  Then, as $a \to 0$,
\begin{equation}  \label{eq:t_s}
t_s \to \frac{2\,v_0 + v_f - 3\,(s_f - s_0)}{3\,(v_0 + v_f) - 6\,(s_f - s_0)}\,.
\end{equation}
\end{corollary}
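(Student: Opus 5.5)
The plan is to identify the limit of the gap function $v(t)=c_1t+c_2$ as $a\to 0$, and then read off the switching time as its zero, $t_s=-c_2/c_1$. The guiding observation is that when $a=0$ the box ${\cal B}$ collapses to $\{0\}$. The best approximation problem \eqref{prob:min_dist} then becomes the problem of finding the minimum-norm element of ${\cal A}$, i.e.\ the unconstrained minimum-energy control of the double integrator. By Theorem~\ref{thm:gap&bbang}, or directly, this control is affine, $v^0(t)=c_1^0t+c_2^0$.

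First, I compute $v^0$ explicitly. Integrating the dynamics with $u=v^0$ and imposing the boundary conditions gives $\int_0^1 v^0\,dt=v_f-v_0$ and $\int_0^1(1-t)v^0\,dt=s_f-s_0-v_0$. Equivalently,
\[
\tfrac12 c_1^0+c_2^0=v_f-v_0,\qquad \tfrac13 c_1^0+c_2^0=2(s_f-s_0-v_0).
\]
This yields $c_1^0=6\,(v_0+v_f-2(s_f-s_0))$, which is nonzero precisely under the hypothesis $s_f-s_0\neq(v_0+v_f)/2$, and $c_2^0=6(s_f-s_0)-4v_0-2v_f$. Hence
\[
-\frac{c_2^0}{c_1^0}=\frac{2v_0+v_f-3(s_f-s_0)}{3(v_0+v_f)-6(s_f-s_0)},
\]
which is exactly the right-hand side of \eqref{eq:t_s}. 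As a consistency check, one can equivalently set $r=0$ in \eqref{eqn1}--\eqref{eqn2} and solve for $(c_1,t_s)$; the same value should come out, and in the case $v_0=v_f$ it gives $1/2$, in agreement with part (a)(ii) of Theorem~\ref{DI_infeas_control}.

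Second, and this is the main step, I show that $v^a\to v^0$ in ${\cal L}^2$, where $v^a=u_{\cal A}-u_{\cal B}$ is the optimal gap for bound $a$. Write $w:=v^a+u_{\cal B}=u_{\cal A}\in{\cal A}$. Since ${\cal A}$ is a closed affine subspace by \eqref{eq:A closed} and $v^0=P_{\cal A}(0)$, the Pythagorean identity gives $\|w\|^2=\|v^0\|^2+\|w-v^0\|^2$. On the other hand, optimality of $v^a$, compared with the pair $(v^0,0)\in{\cal A}\times{\cal B}$, gives $\|v^a\|\le\|v^0\|$. Together with $\|u_{\cal B}\|_{{\cal L}^2}\le a$, this yields $\|w\|\le\|v^0\|+a$. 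Therefore
\[
\|u_{\cal A}-v^0\|^2\le(\|v^0\|+a)^2-\|v^0\|^2\to0,
\]
and so $v^a=u_{\cal A}-u_{\cal B}\to v^0$ as well. Because all the $v^a$ lie in the two-dimensional space of affine functions, where all norms are equivalent, this convergence gives $(c_1,c_2)\to(c_1^0,c_2^0)$.

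Finally, since $c_1^0\neq0$, for small $a$ we have $c_1\neq0$, and by continuity $t_s=-c_2/c_1\to -c_2^0/c_1^0$. The point I expect to need care is matching this with the conventions of Theorem~\ref{DI_infeas_control}, where $t_s$ is set to $0$ or $1$ in the degenerate cases (I) and (B)(I)/(II)(i). I would state the corollary for $t_s$ interpreted as the zero of $v$, equivalently as the solution $t_s$ of \eqref{eqn1}--\eqref{eqn2}. I would then remark that when the limit value lies in $(0,1)$, the genuine switching structure \eqref{uB_DI_a} holds for all sufficiently small $a$, so the conventions do not interfere. Alternatively, one can argue directly that solutions of \eqref{eqn1}--\eqref{eqn2}, with $|r|=a$, converge to the solution of the $r=0$ system, which is nondegenerate because $c_1^0\neq0$.
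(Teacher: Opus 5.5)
Your proof is correct, and it takes a genuinely different and more rigorous route than the paper's.

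The paper lets $r\to 0$ directly in \eqref{eqn1}--\eqref{eqn2}. It then solves \eqref{eqn1} as $c_1=2(v_f-v_0)/(1-2t_s)$ and substitutes this into \eqref{eqn2}. That tacitly assumes the pair $(c_1,t_s)$ of Theorem~\ref{DI_infeas_control} converges, and converges to a root of the $r=0$ system. The division also does not cover $v_0=v_f$, where $t_s=1/2$.

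Your estimate $\|u_{\cal A}-v^0\|^2\le 2a\|v^0\|+a^2$ supplies exactly that missing convergence, with an $O(\sqrt{a})$ rate. It follows from the Pythagorean identity for $v^0=P_{\cal A}(0)$ and the competitor pair $(v^0,0)\in{\cal A}\times{\cal B}$. It also identifies the limiting gap as the unconstrained minimum-energy control. Your $c_1^0$ and $c_2^0$ agree with \eqref{eqn:projA} evaluated at $u^-=0$.

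Your caveat about the meaning of $t_s$ is substantive, not cosmetic. Take $s_0=0$, $s_f=3/2$, $v_0=0$, $v_f=4$; this is infeasible for every $a<4$, since $\int_0^1u=4$. Then $v^0(t)=6t+1$, and the limit in \eqref{eq:t_s} is $-1/6$. Yet $v^a(t)=6t+1-a>0$ on $[0,1]$ and $u_{\cal B}\equiv a$ for $a<1$. So the corollary holds for $t_s:=-c_2/c_1$, but not with the paper's convention $t_s\in\{0,1\}$ in non-switching cases.

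Two small corrections.
\begin{itemize}
\item Your ``equivalently'' holds only in the genuinely switching case. In the example, the zero of the true gap does not satisfy \eqref{eqn1} with $r=-a$; the residual is $a(1-a)/3$.
\item Your alternative route is only partial on its own. The $r=0$ system is indeed nondegenerate: its Jacobian determinant in $(c_1,t_s)$ at a root is $-c_1/12$. However, the implicit function theorem only gives a local solution branch. Showing that the theorem's $(c_1,t_s)$ lies on that branch needs the a priori convergence from your main argument.
\end{itemize}

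In short, the paper's route is shorter and stays within the equations of Theorem~\ref{DI_infeas_control}. Yours gives an actual proof, a rate, and the correct interpretation of $t_s$.
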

\begin{proof}
Taking the limit as $a \to 0$, we have $r \to 0$ and \eqref{eqn1} becomes $-c_1\,t_s = v_f - v_0 - c_1/2$, which gives $c_1 = 2\,(v_f - v_0) / (1 - 2\,t_s)$.  Substitution of this expression for $c_1$ into \eqref{eqn2} and lengthy but straightforward algebraic manipulations result in~\eqref{eq:t_s}.
\end{proof}

\subsection{A case study}
Suppose that $s_0 = s_f = v_f = 0$ and $v_0 = 1$, as in the numerical example studied in~\cite{BurKayMou2024}.  For the critically feasible case, it was proved in~\cite[Remark~6]{BurKayMou2024} that $a_c = 1+\sqrt{2}$ with the switching time $t_c = 1/\sqrt{2}$.  The gap function for this case is $v(t) = 0$ for all $t\in[0,1]$, of course.  For the infeasible case, $a = 2$, $1.5$ and $1$ were considered and the switching time $t_s$ for each value of $a$ was computed, correct to three decimal places, by solving a direct discretization of Problem~(Pf).  These results from~\cite{BurKayMou2024} are listed in the last column of Table~\ref{table:DI_ts}.  In~\cite{BurKayMou2024}, the gap function for the infeasible case was constructed only graphically.

Theorem~\ref{DI_infeas_control} furnishes the means to compute $t_s$ and $v(t) = c_1\,t + c_2$ accurately and far more quickly than direct discretization: we just have to solve the system of equations \eqref{eqn1}--\eqref{eqn2} for $t_s$ and $c_1$, and use $c_2 = -c_1\,t_s$.  The solution for each value of $a$ is provided in Table~\ref{table:DI_ts}, computed correct to 15 decimal places by using the Newton method, after setting $r = -a$ in~\eqref{eqn1}--\eqref{eqn2}.  Since for each infeasible case $\sgn(c_2) < 0$, one has the control sequence $\{-a, a\}$, i.e., $u(t) = -a$ for $t\in[0,t_s)$ and $u(t) = a$ for $t\in[t_s,1]$.

\begin{table}[t!]
\centering
{\small
\begin{tabular}{ccccc} \hline \\[-2mm]
$a$ & $c_1$ & $c_2$ & $t_s$ & $t_s$~\cite{BurKayMou2024} \\[2mm] \hline \\[-3mm]
$1+\sqrt{2}$ & 0 & 0 &  &  $1/\sqrt{2}$ \\[1mm] 
2 & 0.971167995377200 & -0.680944121562971 & 0.701159969031407 & 0.701  \\[1mm] 
1.5 & 2.172720275813312 & -1.506394502798288 & 0.693321878369456 & 0.693  \\[1mm] 
1 & 3.410002343912741 & -2.335315565026760 & 0.684842803464806 & 0.685  \\[1mm] 
0.5 & 4.685608293782471 & -3.166921471137164 & 0.675882675754071 &   \\[1mm] 
0.1 & 5.734078050140310 & -3.833335390181957 & 0.668518174440991 &   \\[1mm] 
$\to 0$ & 6 & -4 & 2/3 & 0.667  \\[1mm] 
\hline\hline \end{tabular}
}
\caption{\sf Solutions for $c_1$, $c_2$ and $t_s$ by solving~\eqref{eqn1}--\eqref{eqn2}.  The last column lists the approximations of $t_s$ computed in~\cite{BurKayMou2024}.}
\label{table:DI_ts}
\end{table}

For this case study, when $a \to 0$ we get $t_s \to 2/3$ by Corollary~\ref{cor:a=0}, which was predicted numerically in~\cite{BurKayMou2024}.  Moreover, using the expression for $c_1$ in the proof of Corollary~\ref{cor:a=0}, in the limit $c_1 = 6$ and $c_2 = -c_1\,t_s = -4$, i.e., $v(t) = 6\,t - 4$.  These are also listed in Table~\ref{table:DI_ts}, both for illustration and for completeness.

Figure~\ref{fig:DI} shows for various values of $a$ the best approximation control and the gap function, all from~\cite{BurKayMou2024}.  Note again that $a = 1 + \sqrt{2}$ is the critically feasible case, while all other cases are infeasible.

\begin{figure}[t!]
\centering
\begin{subfigure}{.5\textwidth}
\centering
\includegraphics[width=\textwidth]{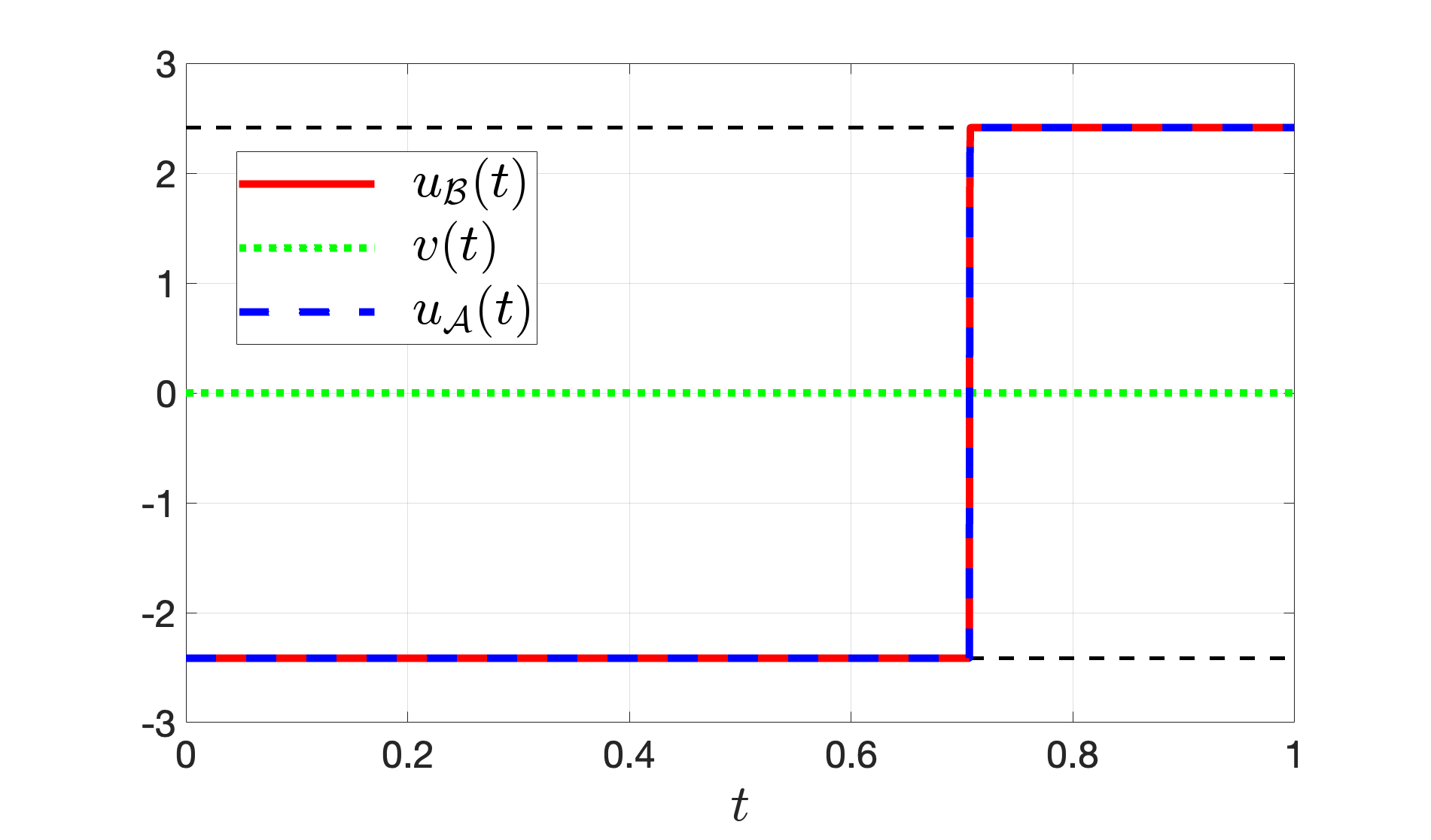}
\caption{$a = a_c = 1 + \sqrt{2}$}
\label{fig:a=2.414}
\end{subfigure}
\hfill
\begin{subfigure}{.49\textwidth}
\centering
\includegraphics[width=\textwidth]{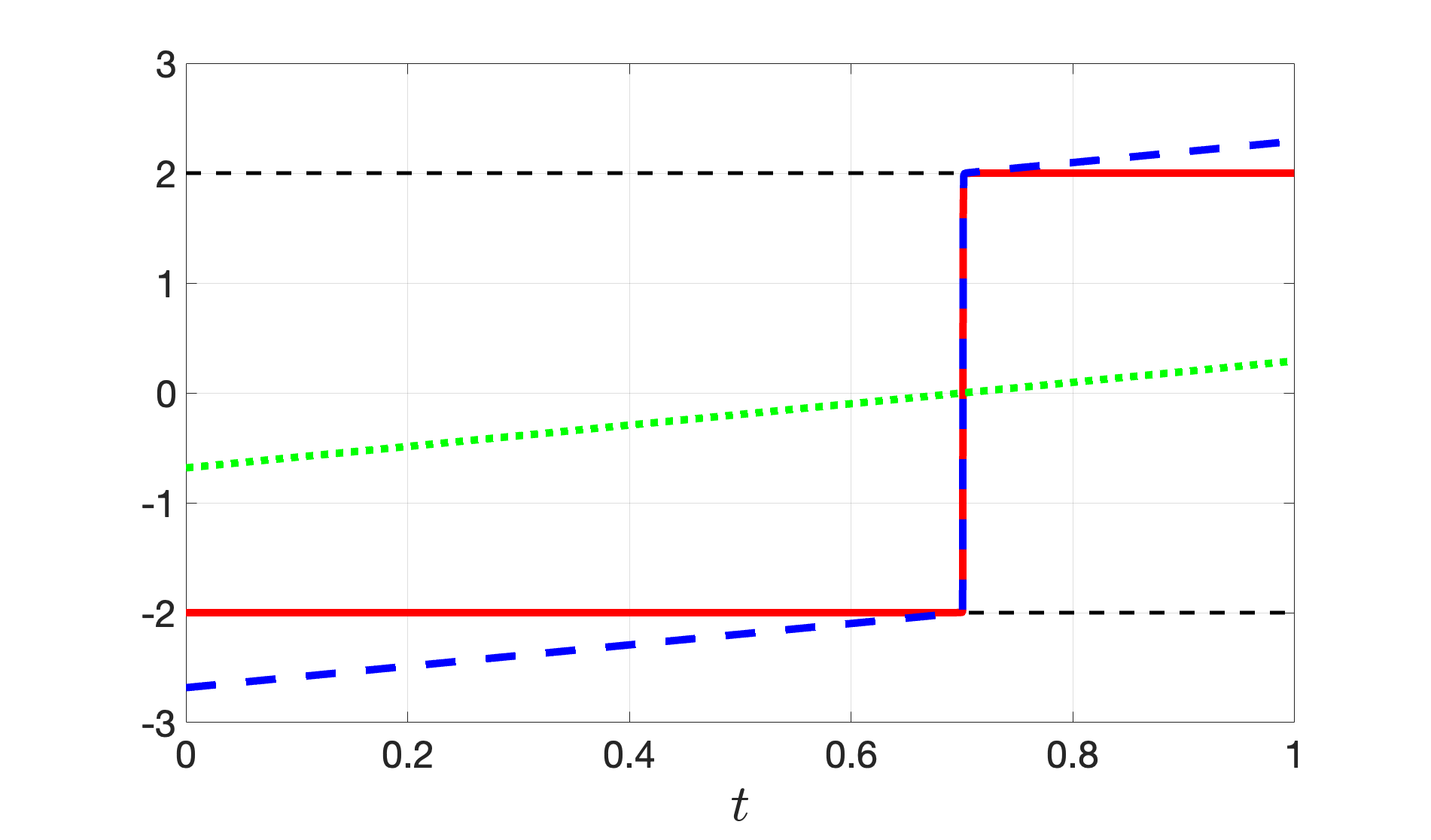}
\caption{$a = 2$}
\label{fig:a=2}
\end{subfigure}
\begin{subfigure}{.5\textwidth}
\centering
\includegraphics[width=\textwidth]{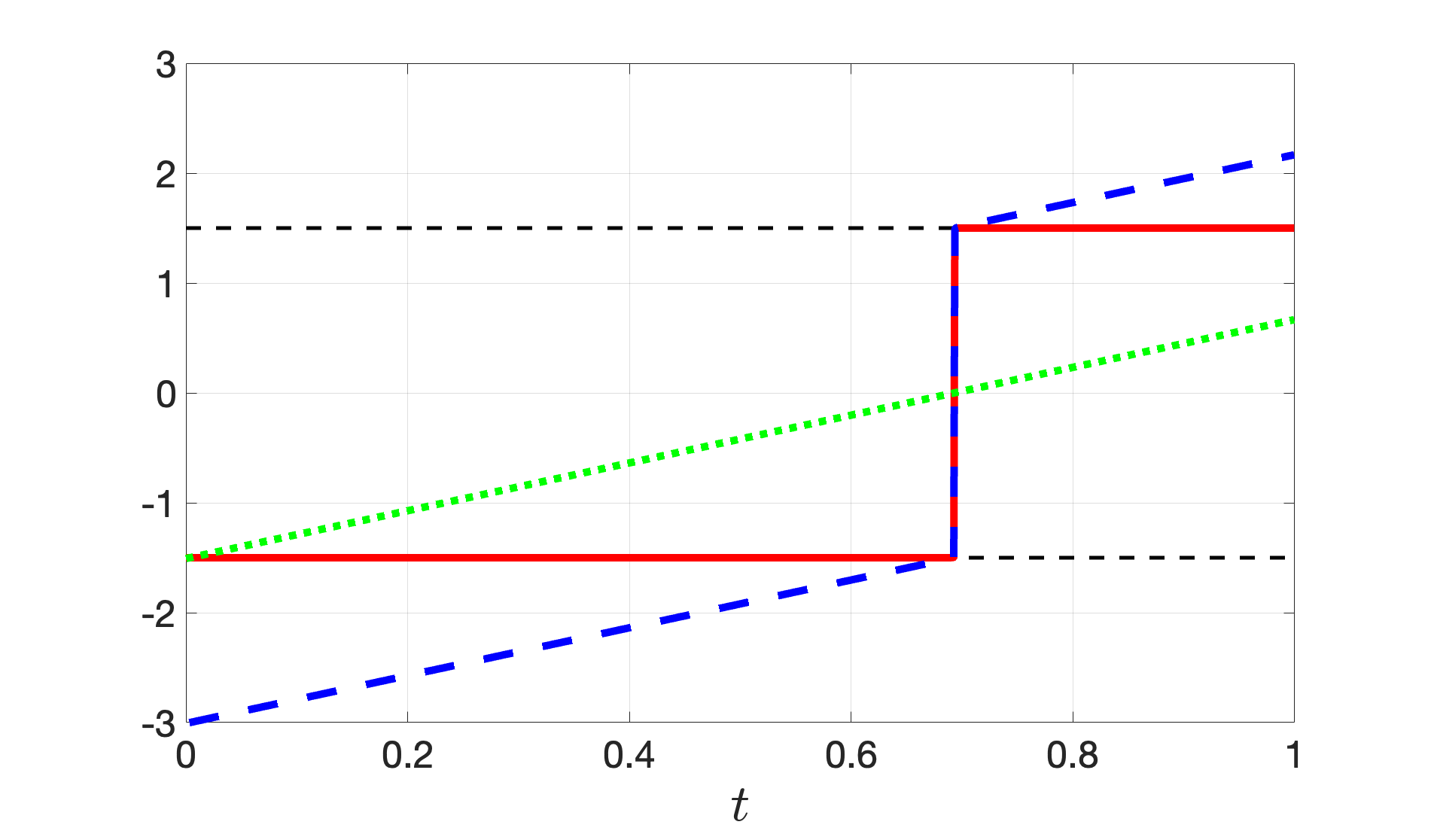}
\caption{$a = 1.5$}
\label{fig:a=1.5}
\end{subfigure}
\hfill
\begin{subfigure}{.49\textwidth}
\centering
\includegraphics[width=\textwidth]{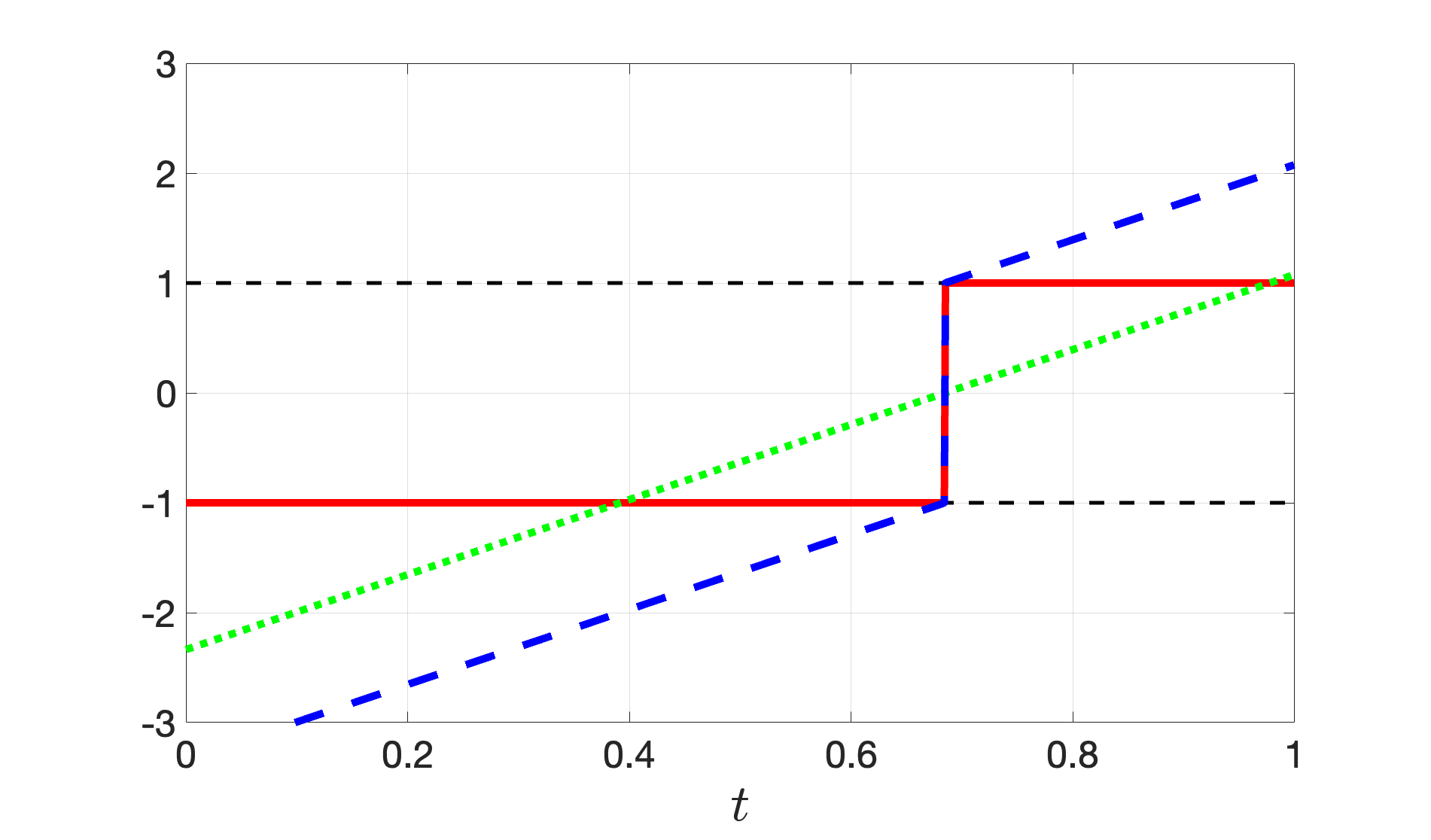}
\caption{$a = 1$}
\label{fig:a=1}
\end{subfigure}
\caption{\small\sf The best approximation control and the gap function for the double integrator for various values of $a$~\cite{BurKayMou2024}.}
\label{fig:DI}
\end{figure}

\ \\[-3mm] 
\noindent
{\bf Newton and Generalized Newton Methods to Solve (\ref{eqn1})--(\ref{eqn2}).}
As pointed out in the case study above, an immediate approach to solve the nonlinear system of equations \eqref{eqn1}--\eqref{eqn2} for $t_s$ and $c_1$ is to use the Newton method, after setting $r = a$ or $r = -a$ in~\eqref{eqn1}--\eqref{eqn2}.

It is well known that for convergence with Newton one needs to provide an initial guess that is ``close enough'' to the solution.  Graphical {\em portraits} displaying colour-coded number of iterations to {\em reach} a solution are useful tools in judging how robust a method is in terms of the choice of the initial iterates. We can perhaps rephrase this concern by asking the following question.  How large is the region in which, when an initial iterate (or guess) is chosen, one can get to a solution in a ``reasonable'' number of iterations?

This question also prompts one to consider variants of the Newton method that may perform better in a larger domain.  One such variant is the multivariate generalized Newton method proposed in~\cite{BurCalKay2021}, which itself is a generalization of the univariate version in~\cite{BurKaySab2012}.  While the Newton method solves $f(y) = {\bf 0}$, the generalized Newton method is the Newton method that solves $(s \circ f)(y) = {\bf 0}$, where the auxiliary function $s$ is differentiable and has a differentiable inverse so that the solutions of the equation remain the same, and $s$ is chosen in such a way that the Newton method applied to solving $(s \circ f)(y) = {\bf 0}$ exhibits more desirable convergence properties, notably a larger domain of convergence.  Obviously, when $s$ is the identity function the generalized Newton method reduces to the Newton method.  More recently a Levenberg--Marquardt-type extension of~\cite{BurCalKay2021} has also been proposed in~\cite{BurKay2025}, which, with carefully chosen parameters, potentially enlarges the region further in which convergence is achieved.

In Figures~\ref{fig:DI_Newton_basin}--\ref{fig:DI_gNewton_basin}, the portraits of the colour-coded number of iterations required to reach a solution of~\eqref{eqn1}--\eqref{eqn2} are shown for Newton and generalized Newton methods for boundary values $s_0 = s_f = v_f = 0$ and $v_0 = 1$, and infeasible values $a = 2$, $1.5$, $1$ and $0.5$.  The solutions obtained are correct to $15$ decimal places.  The portraits show that, with the Newton method, a solution is reached in a relatively large $t_s$$c_1$-plane region in less than $10$ iterations.  This region becomes larger with the generalized Newton method with $s(y) = (y_1^3, y_2)$.

\begin{figure}[t!]
\centering
\begin{subfigure}{.5\textwidth}
\centering
\includegraphics[width=\textwidth]{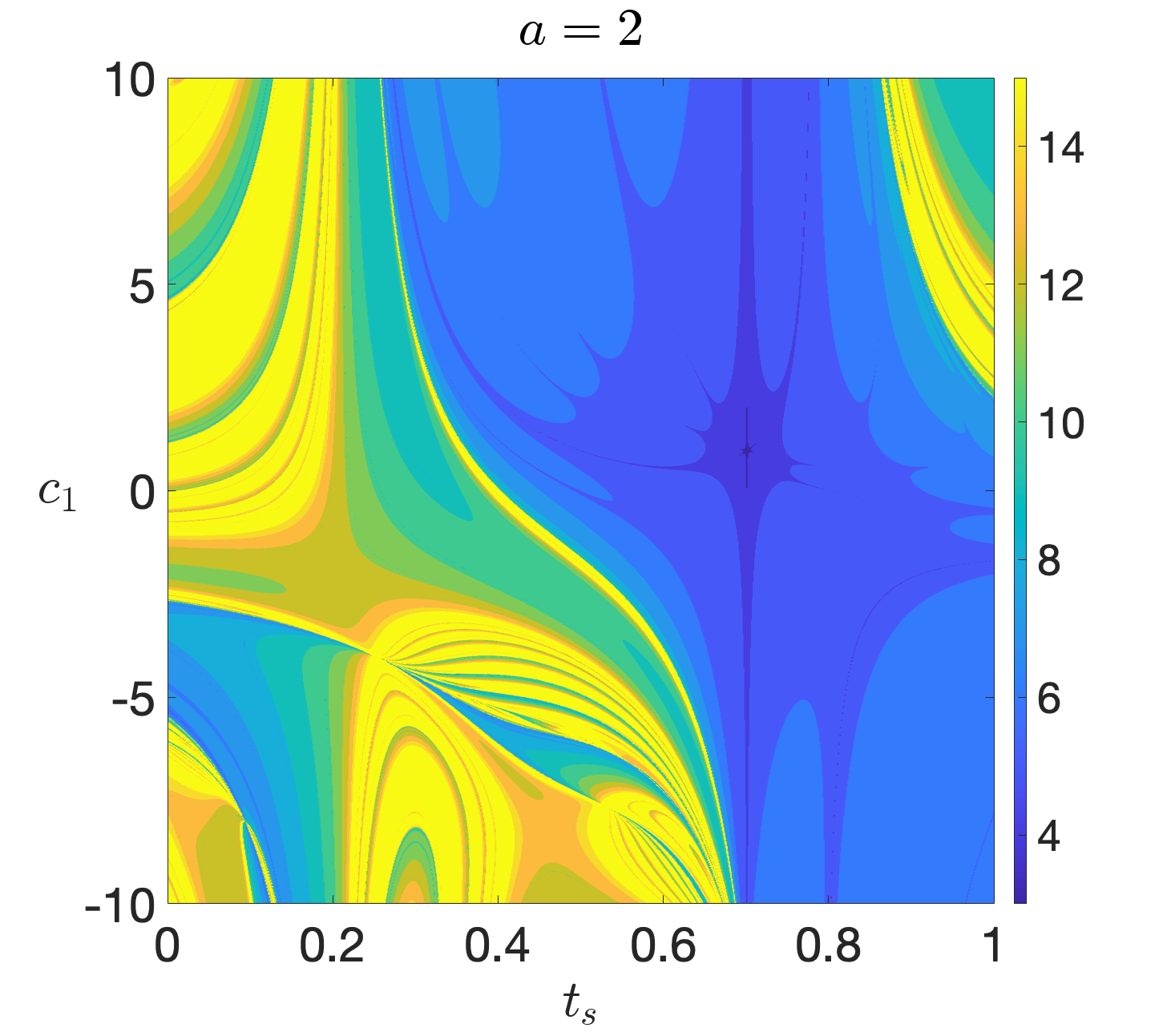}
\label{fig2:a=2}
\end{subfigure}
\hfill
\begin{subfigure}{.49\textwidth}
\centering
\includegraphics[width=\textwidth]{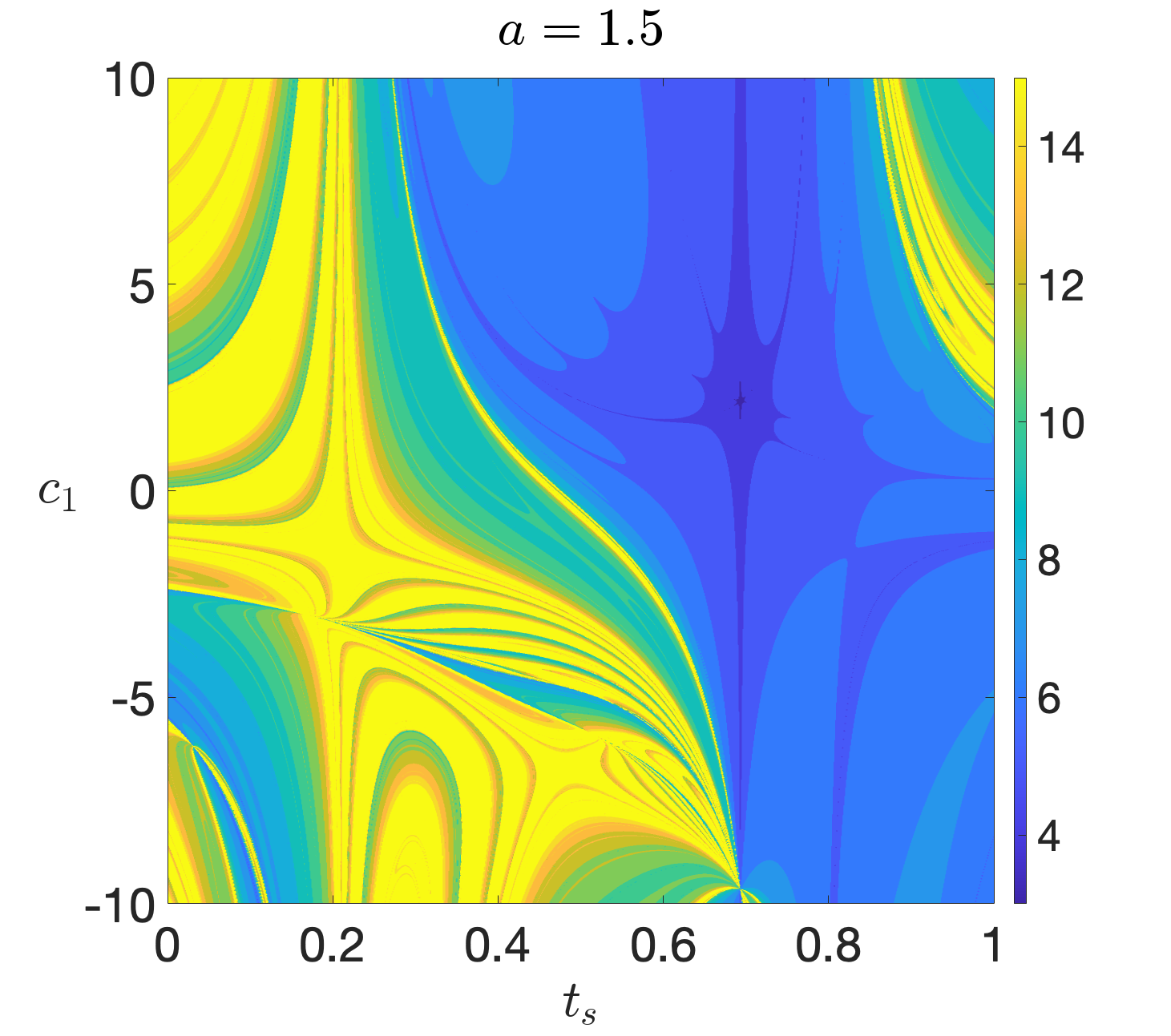}
\label{fig2:a=1.5}
\end{subfigure}
\begin{subfigure}{.5\textwidth}
\centering
\includegraphics[width=\textwidth]{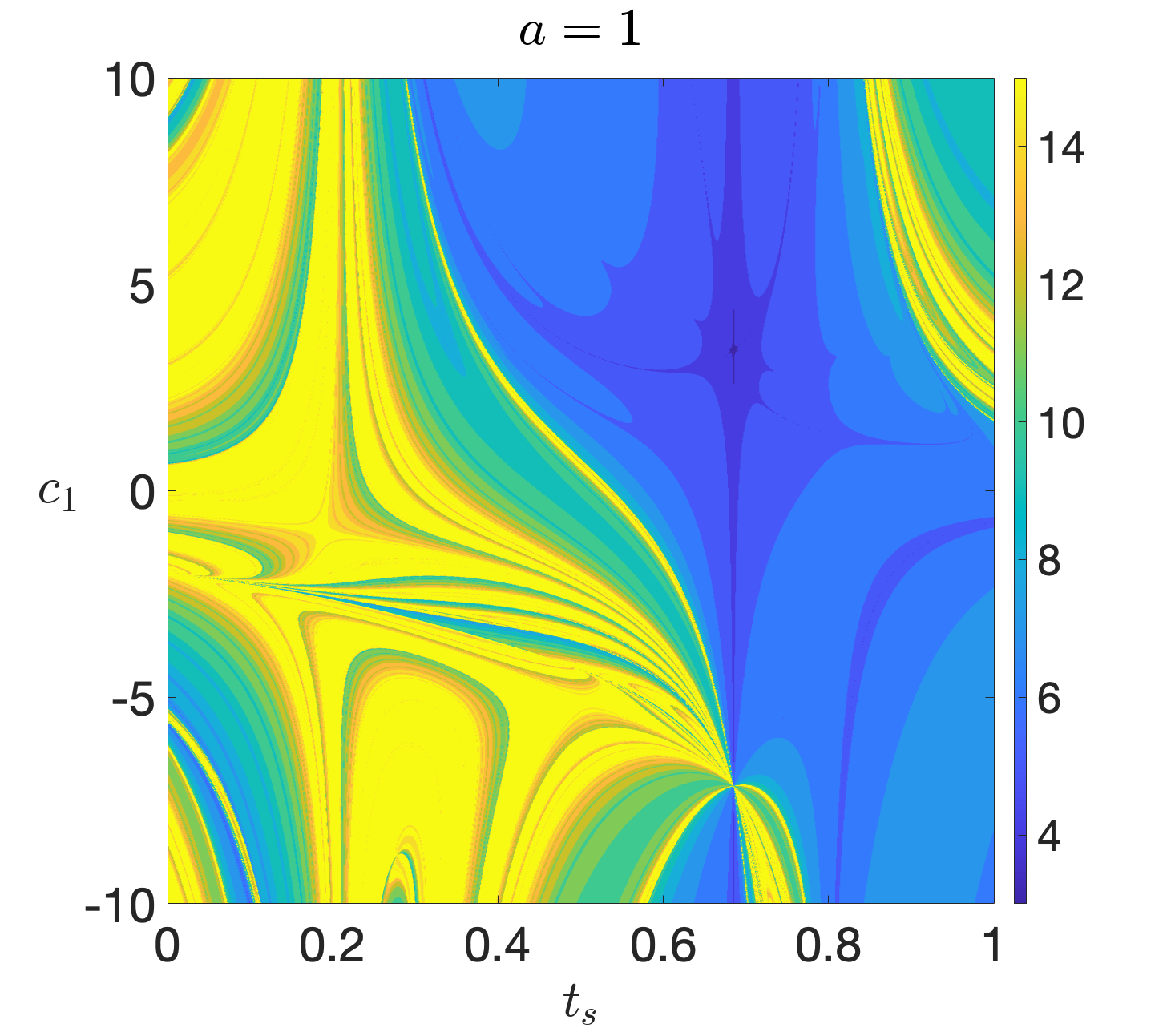}
\label{fig2:a=1}
\end{subfigure}
\hfill
\begin{subfigure}{.49\textwidth}
\centering
\includegraphics[width=\textwidth]{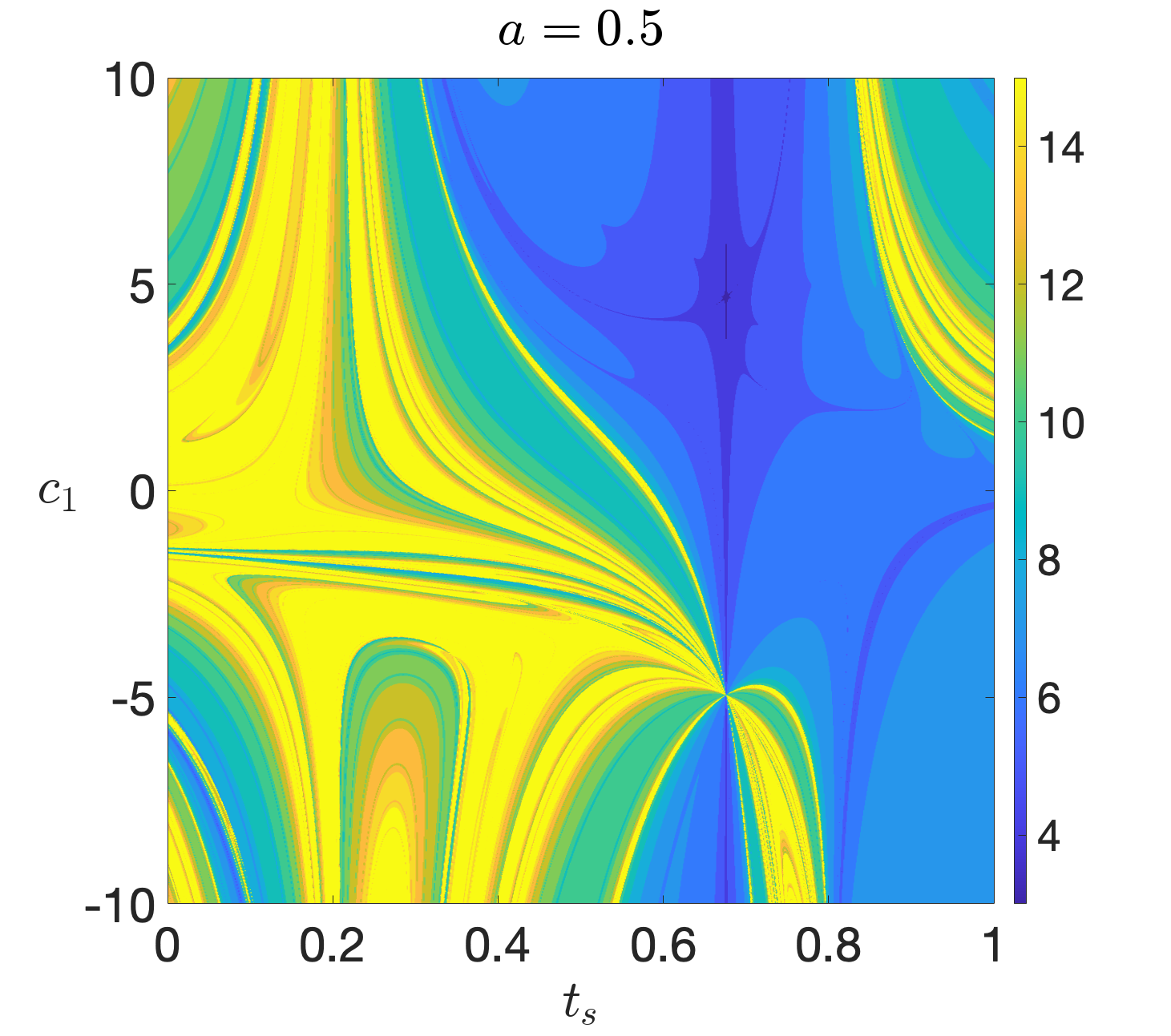}
\label{fig2:a=0.5}
\end{subfigure}
\caption{\small\sf Portraits of the colour-coded number of iterations required to reach a solution of~\eqref{eqn1}--\eqref{eqn2} via the Newton method.}
\label{fig:DI_Newton_basin}
\end{figure}
 
\begin{figure}[t!]
\centering
\begin{subfigure}{.5\textwidth}
\centering
\includegraphics[width=\textwidth]{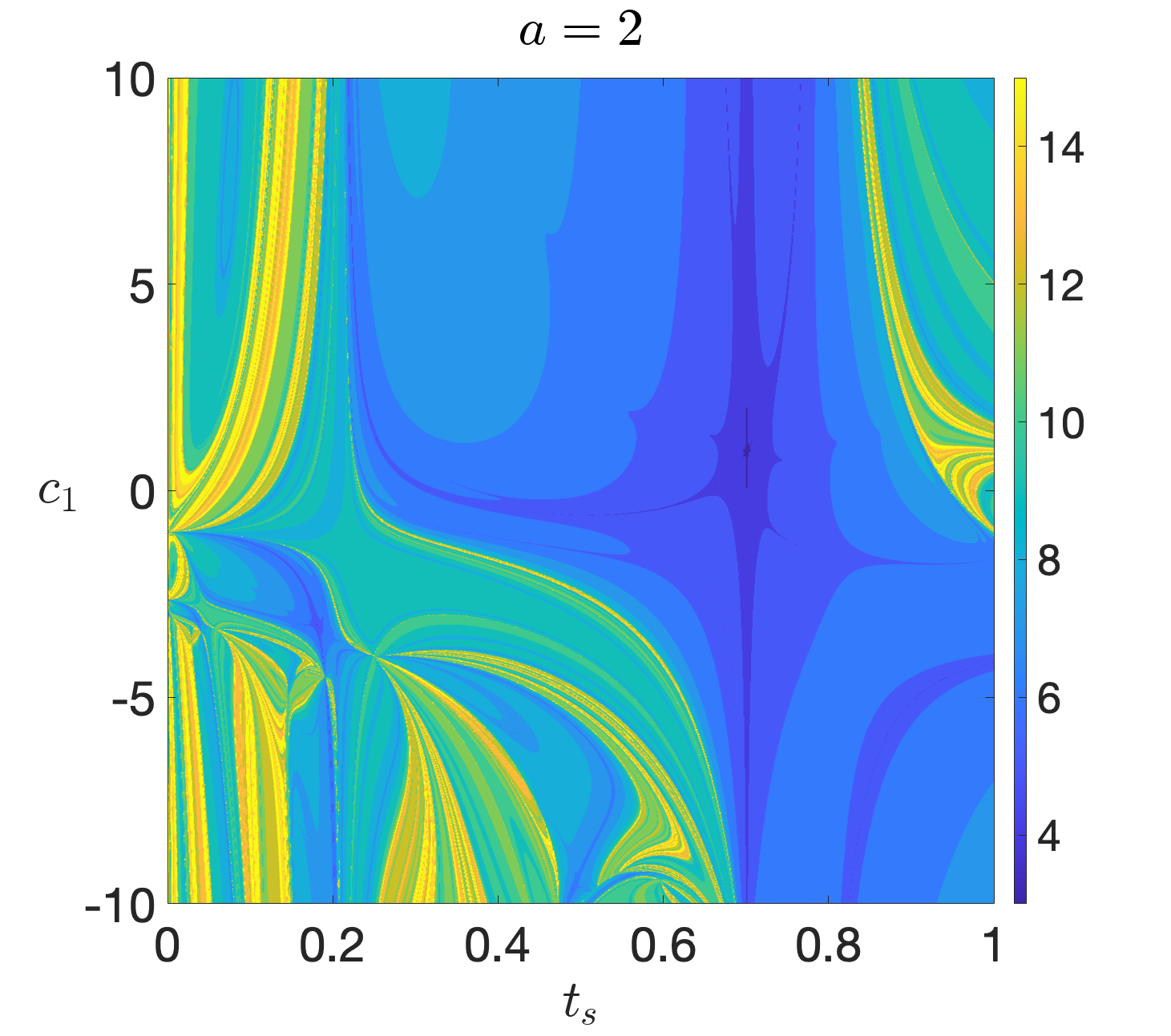}
\label{fig3:a=2}
\end{subfigure}
\hfill
\begin{subfigure}{.49\textwidth}
\centering
\includegraphics[width=\textwidth]{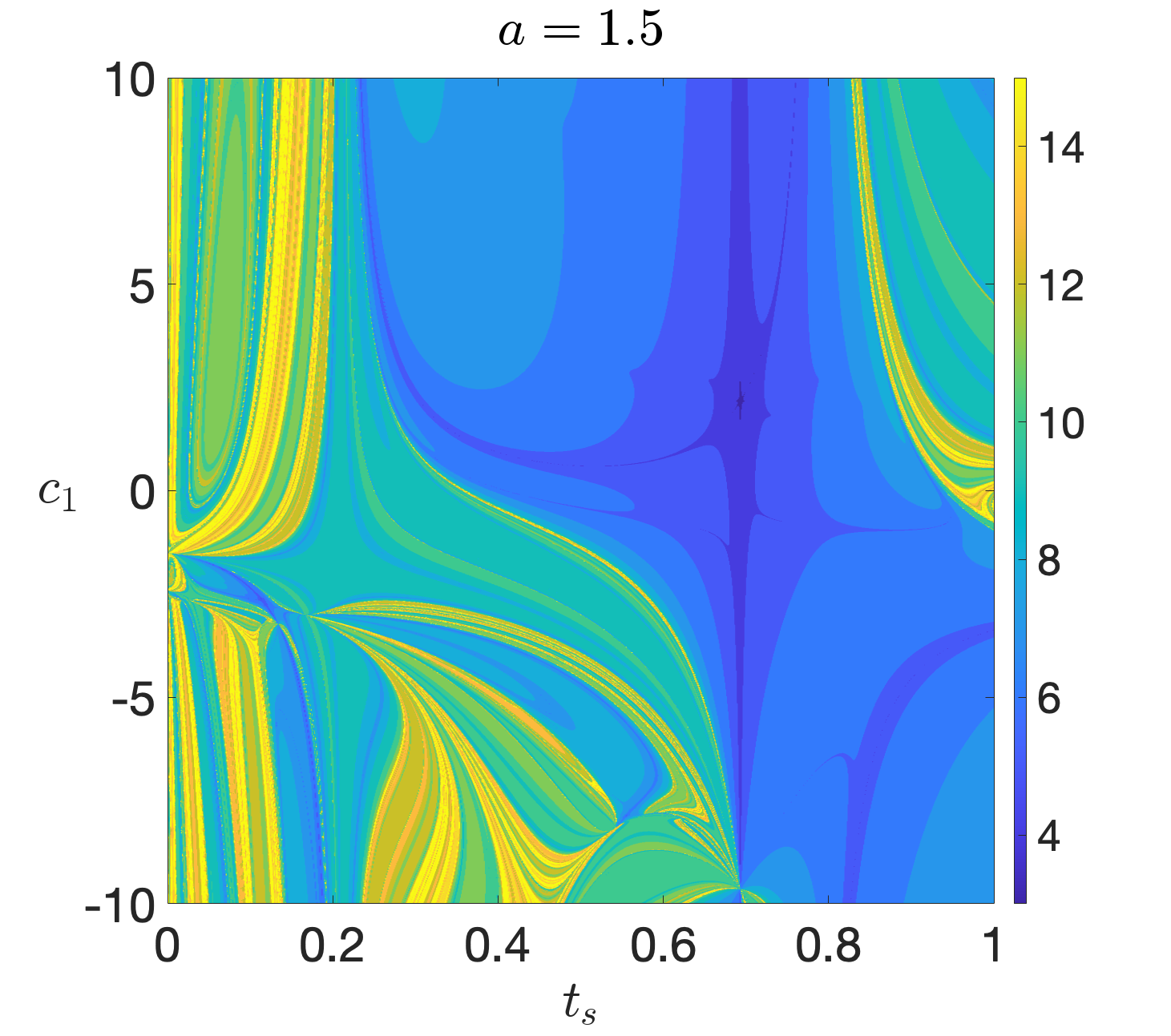}
\label{fig3:a=1.5}
\end{subfigure}
\begin{subfigure}{.5\textwidth}
\centering
\includegraphics[width=\textwidth]{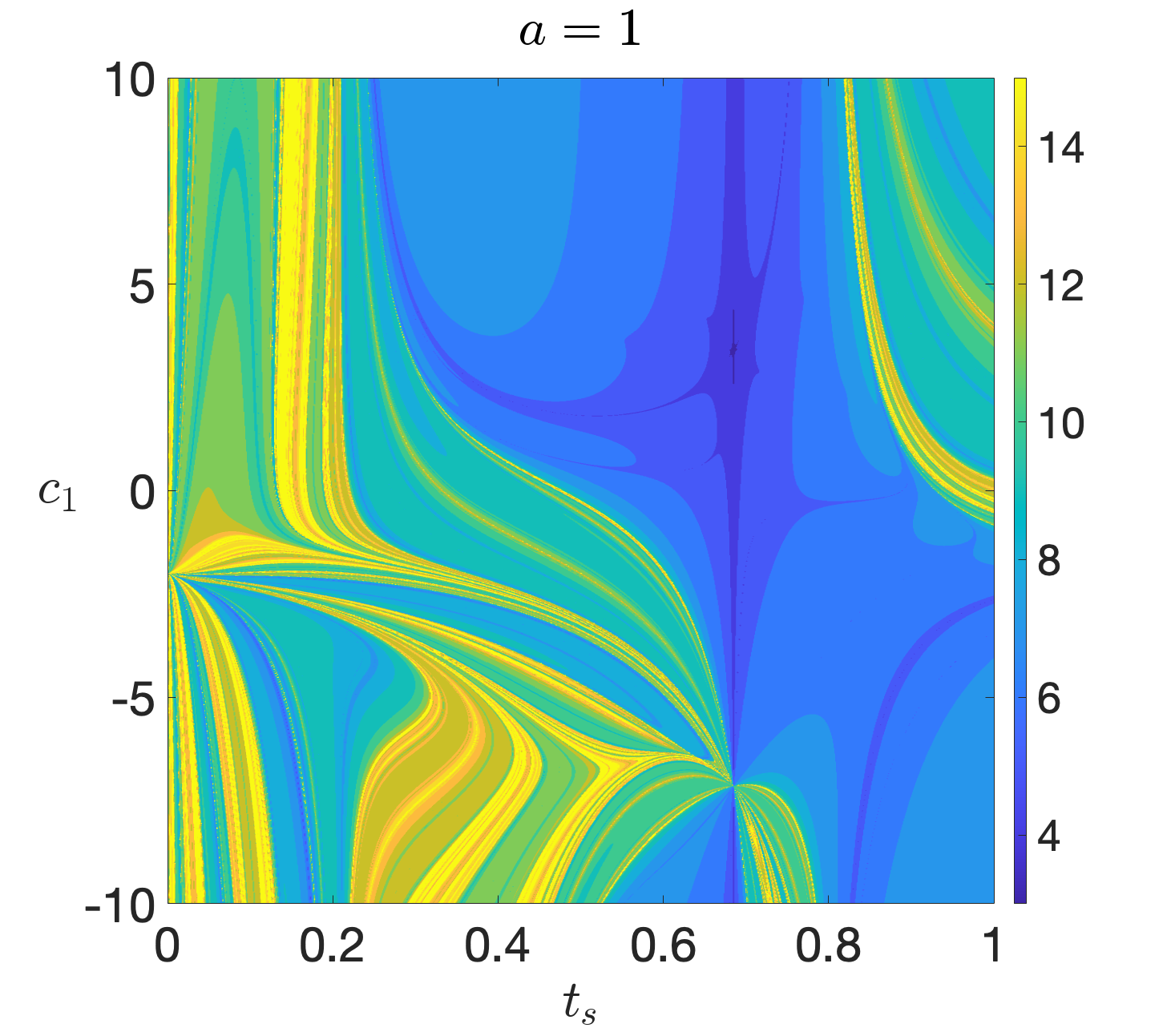}
\label{fig3:a=1}
\end{subfigure}
\hfill
\begin{subfigure}{.49\textwidth}
\centering
\includegraphics[width=\textwidth]{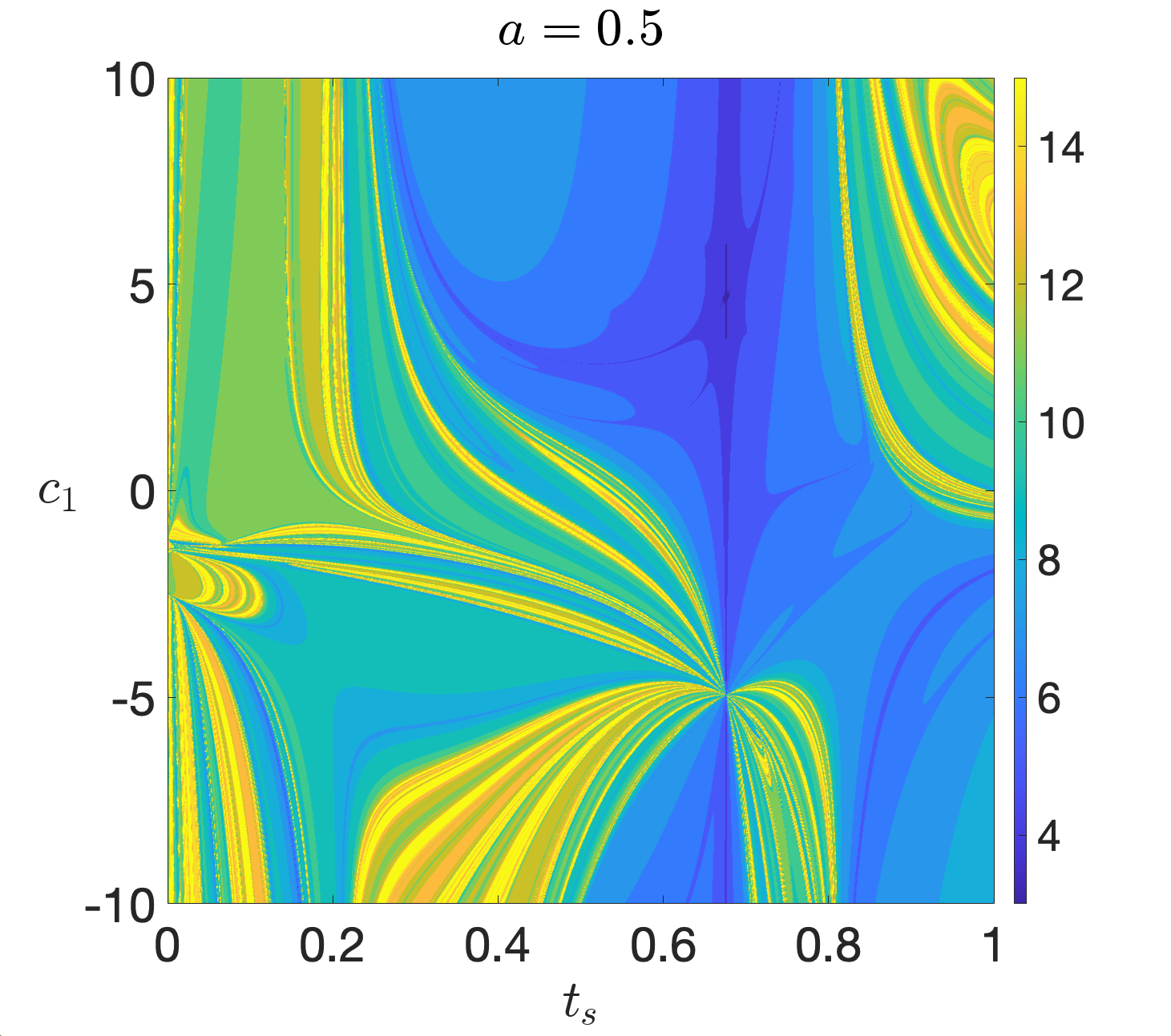}
\label{fig3:a=0.5}
\end{subfigure}
\caption{\small\sf Portraits of the colour-coded number of iterations required to reach a solution of~\eqref{eqn1}--\eqref{eqn2} via the generalized Newton method with $s(y) = (y_1^3, y_2)$.}
\label{fig:DI_gNewton_basin}
\end{figure}

\section{Douglas--Rachford Algorithm}
\label{sec:DR}

To define the Douglas--Rachford (DR) algorithm we require the \emph{proximal operators} of $f$ and $g$ where the proximal operator, or proximal mapping, of a function $h$ is defined by \cite[Definition~12.23]{BC2017}:
\begin{equation}\label{def:prox}
    {\rm Prox}_h(u) := \argmin_{y\in {\cal{L}}^2([t_0,t_f];\dR^m)}
    \left(h(y) + \frac{1}{2}\|y - u\|^2_{{\cal L}^2}  \right),
\end{equation}
for any $u\in {\cal L}^2([t_0,t_f];\dR^m)$. It is known that $\prox_f$ is firmly non-expansive \cite[Proposition~12.28]{BC2017}. Moreover, if $h = \iota_C$ then $\prox_{h} = P_{C}$ where $P_C$ is the \textit{projection operator} onto~$C$. Recall that the {\em Douglas--Rachford operator} is given as
\begin{align*}
    T &= (1-\lambda)\Id+\lambda(\Rr_{\tau g}\Rr_{\tau f}), \\[2mm]
    &= \Id+2\lambda(\prox_{\tau g}(2\prox_{\tau f}-\Id)-\prox_{\tau f}),
\end{align*}
where $\lambda\in\;(0,1)$. When $\lambda=1/2$ the above operator reduces to the 
DR operator introduced in \cite{LionsMer79}. 
See also \cite[Theorem~26.11]{BC2017} for a more general formula which allows for a variable $\lambda$.
Consider the following functions,
\begin{equation}\label{def:fg}
    f=\iota_{\cal B}+\frac{1}{2}\|\cdot\|^2 \quad\hbox{ and }\quad g=\iota_{\cal A}.
\end{equation}
Let $\tau>0$ and $\gamma:={{\frac{1}{\tau+1}}}\in(0,1)$. Using \cite[Proposition~24.8(i)]{BC2017},
\begin{align}
\prox_{\tau f}(u) & = \prox_{\tau\iota_{\cal B}}(\gamma u) = P_{\cal B}(\gamma u).
    \label{eqn:proxf}\\[2mm]
\prox_{\tau g}(u) &
    = \prox_{\tau\iota_{\cal A}}(u) = P_{\cal A}(u), \label{eqn:proxg} 
   \end{align}
Using \eqref{eqn:proxf} and \eqref{eqn:proxg},
\[
Tu = u + 2\lambda(P_{\cal A}(2P_{\cal B}(\gamma u)-u)-P_{\cal B}(\gamma u)).
\]

\begin{remark}\rm \label{rem:fg}
The convergence result  we state next holds in an arbitrary Hilbert space $U$ and is a straightforward consequence of \cite[Theorem~5.3]{incon2024}. Indeed, consider the functions $\{f,g\}$ defined in \eqref{def:fg}.  We note that $f$ is uniformly convex, $\dom f={\cal B}$ and $\dom g={\cal A}$. We also note that the gap vector $v=u_{\cal A}-u_{\cal B}=P_{\overline{{\rm ran}(\Id-T)}}(0)$.  Indeed, thanks to \cite[Lemma~5.1]{incon2024}, we have that 
\[
\overline{{\rm ran}(\Id-T)}=\overline{\dom f -\dom g}=\overline{{\cal B}-{\cal A}}.
\]
Hence, the vector of minimum norm in $\overline{{\rm ran}(\Id-T)}$ coincides with the vector of minimum norm in $\overline{{\cal B}-{\cal A}}$, and this is precisely the gap vector. With these observations, the following theorem is a direct corollary of \cite[Theorem~5.3]{incon2024}. 
\end{remark}    

\begin{theorem}[Convergence of the DR Algorithm]
\label{thm:DR:convergence:fg}
Let $\lambda\in (0,1)$ and set
\begin{equation}
    T =(1-\lambda) \Id +\lambda \Rr_g\Rr_f,
\end{equation}	
where $f$ and $g$ are as in \eqref{def:fg}. Let $v=u_{\cal A}-u_{\cal B}$ be the gap vector introduced in \eqref{gap_v} and define
\[
Z:=\argmin(-\scal{\cdot}{v}+f+g(\cdot-v)).
\]
Suppose that $Z\neq \varnothing$. Then $(\exists \overline{u}\in U )$
such that $Z=\{\overline{u}\}$.
Let $u\in U$. 
Then the following hold:
\begin{enumerate}	
    \item[(a)]
    \label{thm:DR:convergence:fg:i}	
    $\prox_fT^n u\to \overline{u}$.
    \item[(b)]
    \label{thm:DR:convergence:fg:ii}
    $\prox_g\Rr_fT^n u\to \overline{u}-v$.
\end{enumerate}	
\end{theorem}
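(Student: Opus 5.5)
The plan is to reduce the statement directly to \cite[Theorem~5.3]{incon2024}, as Remark~\ref{rem:fg} anticipates. That reduction only requires checking the hypotheses of that result for the pair $\{f,g\}$ in \eqref{def:fg} on the Hilbert space $U={\cal L}^2([0,1];\dR^m)$.

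First, I would verify the structural assumptions. The function $f=\iota_{\cal B}+\frac12\|\cdot\|^2$ is proper, lower semicontinuous and convex, because ${\cal B}$ is nonempty, closed and convex. It is also uniformly (indeed strongly) convex, since it is the sum of a convex function and $\frac12\|\cdot\|^2$. The function $g=\iota_{\cal A}$ is proper, lower semicontinuous and convex, because ${\cal A}$ is a nonempty closed affine subspace by \eqref{eq:A closed}. Note that $\dom f={\cal B}$ and $\dom g={\cal A}$.

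Next, I would identify the vector $v$ that appears in \cite[Theorem~5.3]{incon2024} with our gap vector. In that theorem, $v$ is the minimal-norm element of $\overline{{\rm ran}(\Id-T)}$. By \cite[Lemma~5.1]{incon2024}, this set equals $\overline{\dom f-\dom g}=\overline{{\cal B}-{\cal A}}$. The minimal-norm element of $\overline{{\cal B}-{\cal A}}$ is $P_{\overline{{\cal B}-{\cal A}}}(0)$, which is attained by a best approximation pair; it exists by Theorem~\ref{thm:gap&bbang}. Some care is needed with the sign convention: the paper writes $v=u_{\cal A}-u_{\cal B}$, whereas the range involves ${\cal B}-{\cal A}$. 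I would reconcile the two by matching the orientation in $Z=\argmin(-\langle\cdot,v\rangle+f+g(\cdot-v))$ with the convention of \cite{incon2024}, swapping the roles or sign of $v$ if necessary.

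Uniqueness of the minimizer follows from convexity. The function $-\langle\cdot,v\rangle+f+g(\cdot-v)$ is the sum of a strongly convex function and convex functions, hence strongly convex. If $Z\neq\varnothing$, it is therefore a singleton $\{\overline u\}$.

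The convergence claims would then be read off from \cite[Theorem~5.3]{incon2024}. That result gives (a) $\prox_fT^nu\to\overline u$ and (b) $\prox_g\Rr_fT^nu\to\overline u-v$, using $\lambda\in(0,1)$ and the uniform convexity of $f$. I expect the main obstacle to be the bookkeeping of conventions, namely the sign of $v$ and the use of $\prox_f$ versus $\prox_{\tau f}$. Once those conventions are aligned, the rest is a direct invocation of the cited result.
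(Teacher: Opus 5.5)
Your proposal is correct and follows the paper's route: the paper's proof simply observes that $f$ is uniformly convex and invokes \cite[Theorem~5.3]{incon2024}, with Remark~\ref{rem:fg} identifying $v$ via \cite[Lemma~5.1]{incon2024} exactly as you do. Your explicit strong-convexity argument that $Z$ is a singleton and your caution about orientation are, if anything, more careful than the paper, and the sign flip you anticipate is indeed necessary: $\overline{u}\in\dom f={\cal B}$ and $\overline{u}-v\in\dom g={\cal A}$ force $v\in{\cal B}-{\cal A}$, so the statement must be read with $v=u_{\cal B}-u_{\cal A}$, which is the minimal-norm element of $\overline{{\cal B}-{\cal A}}$ and the negative of \eqref{gap_v} (with the sign as in \eqref{gap_v}, $Z$ would be empty whenever ${\cal A}\cap{\cal B}=\varnothing$).
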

\begin{proof} 
Since the function $f$ is uniformly convex, all conclusions follow directly from \cite[Theorem~5.3]{incon2024}.
\end{proof}

\subsection{DR Algorithm for the Infeasible Problem~(PDI)} 

The formulas~\eqref{eqn:projA} and \eqref{eqn:projB} below, which can be found in~\cite[Propositions~1 and 2]{BauBurKay2019}, provide the projections onto ${\cal A}$ and ${\cal B}$.  These formulas are in turn going to be used in the DR algorithm (Algorithm~\ref{algo:DR}) in the following.

\noindent
\textbf{(Projection onto ${\cal A}$).}
The projection $P_{\cal A}$ of $u^-\in {\cal L}^2([0,1];\dR)$ onto the set ${\cal A}$ is given by

\begin{equation}\label{eqn:projA}
P_{\cal A}(u^-)(t) = u^-(t) + c_1\,t + c_2\,,
\end{equation}
for all $t\in[0,1]$, where
\begin{eqnarray*}
c_1 &:=& 12\left(s_0+v_0-s_f+\int_0^1 (1-\tau)u^-(\tau)d\tau\right)-6\left(v_0-v_f+\int_0^1 u^-(\tau)d\tau\right), \\[1mm]
c_2 &:=& -6\left(s_0+v_0-s_f+\int_0^1 (1-\tau)u^-(\tau)d\tau\right)+2\left(v_0-v_f+\int_0^1 u^-(\tau)d\tau\right).
\end{eqnarray*}

\noindent
\textbf{(Projection onto ${\cal B}$).}
The projection $P_{{\cal B}}$ of $u^-\in {\cal L}^2([t_0,t_f];\dR)$ onto the set ${\cal B}$ is given by
\begin{equation}  \label{eqn:projB}
P_{{\cal B}}(u^-)(t) = \left\{\begin{array}{rl}
a\,, &\ \ \mbox{if\ \ } u^-(t)\ge a\,, \\[2mm]
u^-(t)\,, &\ \ \mbox{if\ \ } -a\le u^-(t)\le a\,, \\[2mm]
-a\,, &\ \ \mbox{if\ \ } u^-(t)\le -a\,, \\[2mm]
\end{array} \right.
\end{equation}
for all $t\in[t_0,t_f]$.

We implemented the DR algorithm for the infeasible problem~(PDI) as follows.

\begin{algorithm}
\caption{Douglas--Rachford algorithm}
\label{algo:DR}
\begin{description}
\item[Step 1] ({\em Initialization}) Choose two parameters $\gamma\in(0,1)$, $\lambda\in(0,1)$ and the initial iterate $u^0$ arbitrarily. 
Choose a small parameter $\epsilon>0$, and set $k=0$. 
\item[Step 2] ({\em Projection onto ${\cal B}$})  Set $u^- = \gamma u^{k}$. 
Compute $\widetilde{u} = P_{{\cal B}}(u^-)$ by using \eqref{eqn:projB}. 
\item[Step 3] ({\em Projection onto ${\cal A}$}) Set $u^- := 2\widetilde{u}-u^k$. 
Compute $\widehat{u} = P_{{\cal A}}(u^-)$ by using \eqref{eqn:projA}.
\item[Step 4] ({\em Update}) Set $u^{k+1} := u^k + 2\lambda(\widehat{u} - \widetilde{u})$.
\item[Step 5] ({\em Stopping criterion}) If $\|u^{k+1} - u^k\|_{{\cal L}^\infty} \le \epsilon$, then return $\widetilde{u}$ and stop.  
Otherwise, set $k := k+1$ and go to Step 2.

\end{description}
\end{algorithm}

\subsection{Case study}

The numerical experiments we present next were conducted using {\sc Matlab} version R2025b~\cite{Matlab} on a computer with an Intel i5-12600KF CPU and 32 GB of DDR5 RAM. In the experiments, we fixed $\lambda=0.5$ which is the classical DR algorithm. In the numerical implementation we represent the iterates $u^{k}$, $\widehat{u}$ and $\widetilde{u}$ of the control variable $u$ (which are all functions) by $N$ discrete values over a regular partition of their domains $[0,1]$, so that computations can be carried out. To terminate the algorithm we require that at least $99.9\%$ of values in the control variable satisfy the stopping criteria in Algorithm~\ref{algo:DR}, as otherwise the stopping criterion in Step~5 of Algorithm~\ref{algo:DR} is not satisfied easily since the solution for $u$ is piecewise constant (i.e., the solution $u$ has a jump) and we are using the ${\cal L}^\infty$-norm in the stopping criterion. To obtain the switching time $t_s$ we used the discretized optimal control and the equation of the line connecting the $u=-a$ and $u=a$: we simply set $u=0$ in the equation of the line to retrieve $t_s$. We note that all CPU times were recorded over at least 1,000 runs and then averaged.

Figure~\ref{fig:params_PDI} demonstrates the number of iterations required for the DR algorithm to converge for different parameter values $\lambda$ and $\gamma$ as well as different choices of $a$. From Figure~\ref{fig:params3D} we see that the performance of the algorithm improves as $\lambda$ and $\gamma$ approach 1, although a larger $\gamma$ is more important than a larger $\lambda$ for faster convergence. Although this figure is only provided for $a=1.5$, graphs for other values of $a < a_c$ can be seen to be of similar appearance. In Figure~\ref{fig:params2D} we fix $\lambda=0.5$ which is the classical DR algorithm and observe that for $a=0.1,0.5,1,2$ the values for $\gamma$ that provide the best performance are those closest to 1. We also observe that as $a$ gets closer to $a_c$, the DR algorithm requires a much larger number of iterations.

In Table~\ref{tbl:DR} we show results for the DR algorithm applied to Problem~(PDI) with $\lambda=0.5$, $\gamma=0.95$ where $a=0.1,0.5,1,1.5,2$. The errors in $t_s$ were calculated using the values for $t_s$ recorded in Table~\ref{table:DI_ts} as the `exact' values for $t_s$. For a fixed grid size $N$, as $a$ increases we observe slightly larger errors in $t_s$, an increased number of iterations and increased CPU times. As we increase the order of $N$ we see a decrease in the error in $t_s$ by a similar order. These experiments align with observations we have made from previous works for the consistent case: the closer the problem is to critically feasible, the more difficult the problem is for the DR algorithm.

Table~\ref{tbl:Ipopt} provides results using AMPL--Ipopt analogous to those in Table~\ref{tbl:DR}. We have used the AMPL--Ipopt suite to solve the direct discretization of Problem~(Pf) as in~\cite{BurKayMou2024, Caldwell2024} (first-discretize-then-optimize approach). Compared with Table~\ref{tbl:DR} we see that in general Ipopt produced smaller errors in $t_s$ for $N=10^3,10^4$ while the DR algorithm was better for $N=10^5$. For all choices of $N$ the DR algorithm was consistently two orders of magnitude faster than Ipopt. Having said this, we re-iterate that when $a$ is very near $a_c$, since the CPU time for the DR algorithm becomes prohibitively large, the AMPL--Ipopt suite should be the choice of optimization software to use.

\begin{figure}[t!]
    \centering
    \begin{subfigure}{0.54\textwidth}
        \includegraphics[width=\textwidth]{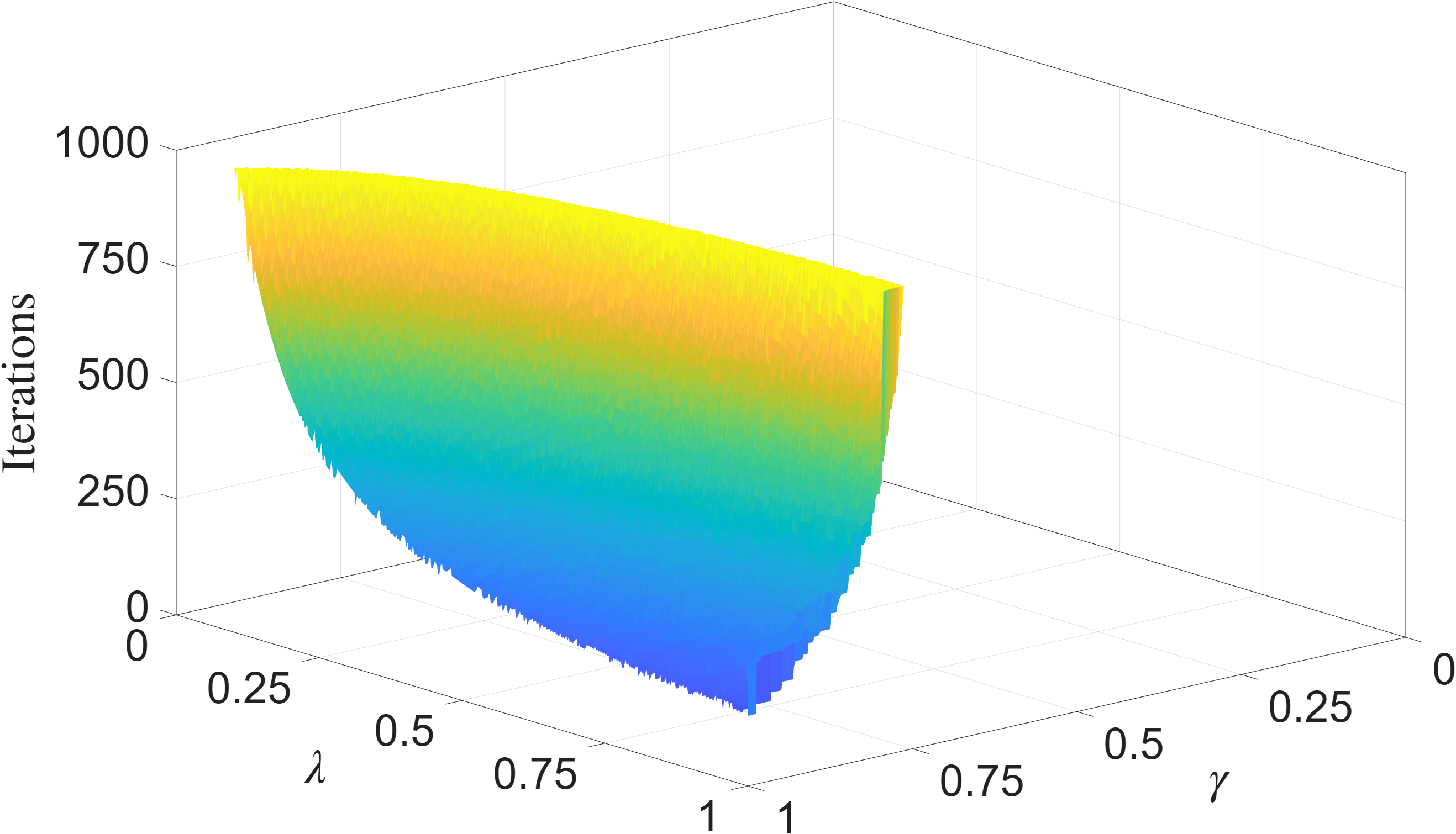}
        \caption{$a=1.5$}
        \label{fig:params3D}
    \end{subfigure}
    \begin{subfigure}{0.44\textwidth}
        \includegraphics[width=\textwidth]{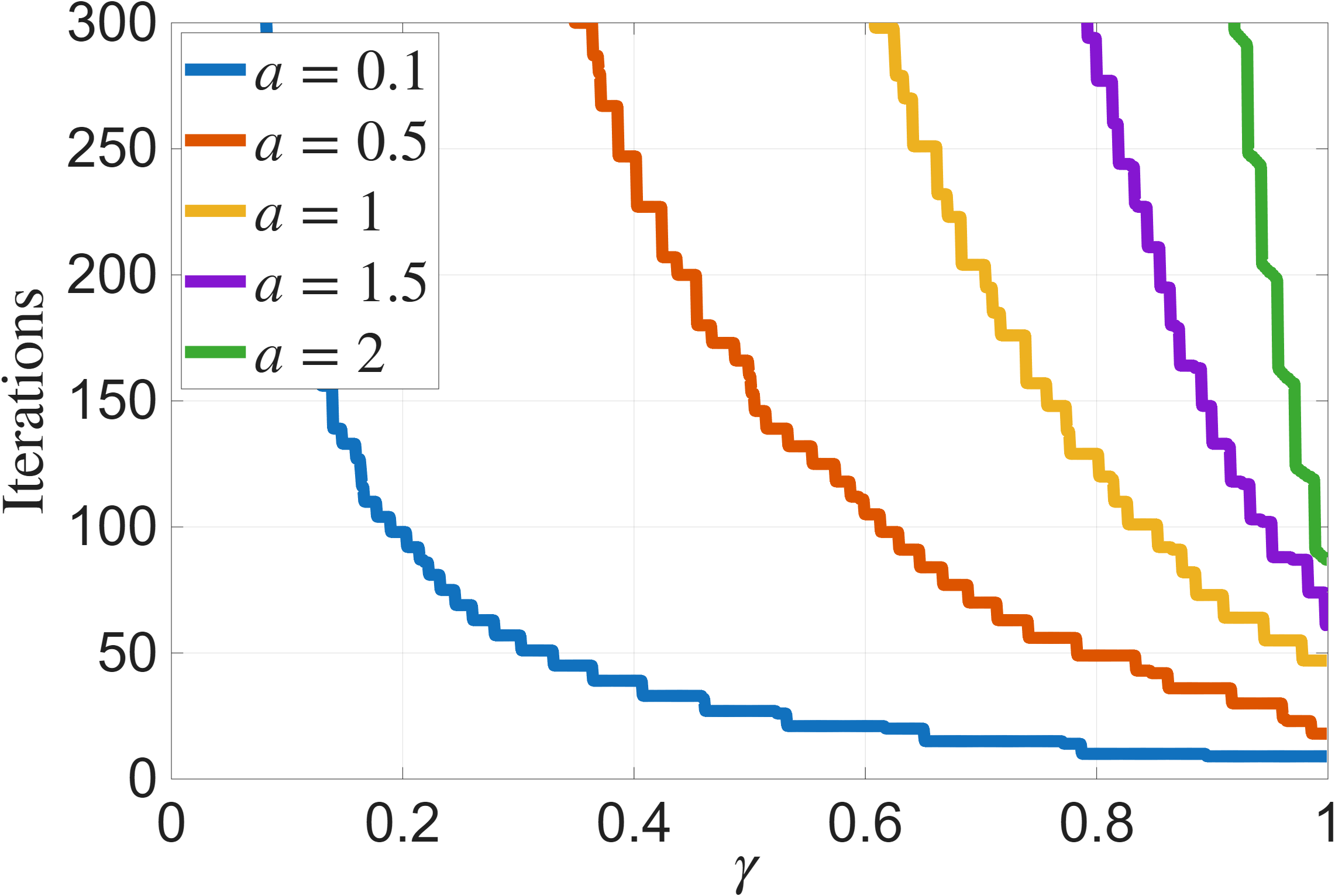}
        \caption{$\lambda=0.5$}
        \label{fig:params2D}
    \end{subfigure}
    \caption{Parameter curves for (PDI) using the DR algorithm with $\epsilon=10^{-6}$.}
    \label{fig:params_PDI}
\end{figure}

\begin{table}[t!]
    \centering
    \begin{tabular}{cc|ccccc}
        $N$ & & $a=0.1$ & $a=0.5$ & $a=1$ & $a=1.5$ & $a=2$ \\ \hline
        & Iterations            & $9$ & $30$ & $55$ & $102$ & $201$ \\
        \multirow{2}{*}{$10^3$} & $t_s$ & $0.6678$ & $0.6729$ & $0.6799$ & $0.6870$ & $0.6959$ \\
        & Error in $t_s$        & $7.6\times10^{-4}$ & $3.0\times10^{-3}$ & $4.9\times10^{-3}$ & $6.4\times10^{-3}$ & $5.3\times10^{-3}$ \\
        & CPU time [s]          & $2.5\times10^{-4}$ & $4.9\times10^{-4}$ & $8.1\times10^{-4}$ & $1.4\times10^{-3}$ & $2.6\times10^{-3}$ \\
        \hline
        & Iterations            & $11$ & $27$ & $50$ & $94$ & $238$ \\
        \multirow{2}{*}{$10^4$} & $t_s$ & $0.6684$ & $0.6757$ & $0.6844$ & $0.6928$ & $0.7005$ \\
        & Error in $t_s$        & $1.1\times10^{-4}$ & $1.5\times10^{-4}$ & $4.1\times10^{-4}$ & $5.7\times10^{-4}$ & $7.1\times10^{-4}$ \\
        & CPU time [s]          & $2.7\times10^{-3}$ & $4.6\times10^{-3}$ & $7.3\times10^{-3}$ & $1.3\times10^{-2}$ & $3.0\times10^{-2}$ \\
        \hline
        & Iterations            & $11$ & $27$ & $51$ & $94$ & $237$ \\
        \multirow{2}{*}{$10^5$} & $t_s$ & $0.6685$ & $0.6760$ & $0.6848$ & $0.6933$ & $0.7011$ \\
        & Error in $t_s$        & $1.5\times10^{-5}$ & $8.1\times10^{-5}$ & $4.3\times10^{-5}$ & $5.5\times10^{-5}$ & $7.1\times10^{-5}$ \\
        & CPU time [s]          & $2.6\times10^{-2}$ & $4.3\times10^{-2}$ & $6.8\times10^{-2}$ & $1.1\times10^{-1}$ & $2.6\times10^{-1}$
    \end{tabular}
    \caption{Switching times for (PDI) using the DR algorithm with $\lambda = 0.5$, $\gamma=0.95$ and $\epsilon=10^{-6}$.}
    \label{tbl:DR}
\end{table}

\begin{table}[h]
    \centering
    \begin{tabular}{cc|ccccc}
        $N$ & & $a=0.1$ & $a=0.5$ & $a=1$ & $a=1.5$ & $a=2$ \\ \hline
        \multirow{3}{*}{$10^3$} & $t_s$ & $0.6678$ & $0.6750$ & $0.6843$ & $0.6926$ & $0.7005$ \\
        & Error in $t_s$        & $7.0\times10^{-4}$ & $9.3\times10^{-4}$ & $5.2\times10^{-4}$ & $7.4\times10^{-4}$ & $6.9\times10^{-4}$ \\
        & CPU time [s]          & $9.0\times10^{-2}$ & $1.2\times10^{-1}$ & $1.3\times10^{-1}$ & $1.4\times10^{-1}$ & $1.5\times10^{-1}$ \\
        \hline
        \multirow{3}{*}{$10^4$} & $t_s$ & $0.6682$ & $0.6775$ & $0.6851$ & $0.6933$ & $0.7010$ \\
        & Error in $t_s$        & $3.3\times10^{-4}$ & $1.6\times10^{-3}$ & $2.8\times10^{-4}$ & $2.1\times10^{-5}$ & $1.6\times10^{-4}$ \\
        & CPU time [s]          & $7.1\times10^{-1}$ & $1.0\times10^{0}$ & $1.2\times10^{0}$ & $1.3\times10^{0}$ & $1.3\times10^{0}$ \\
        \hline
        \multirow{3}{*}{$10^5$} & $t_s$ & $0.6675$ & $0.7003$ & $0.6844$ & $0.6929$ & $0.7005$ \\
        & Error in $t_s$        & $1.1\times10^{-3}$ & $2.4\times10^{-2}$ & $4.8\times10^{-5}$ & $3.8\times10^{-4}$ & $6.4\times10^{-4}$ \\
        & CPU time [s]          & $1.9\times10^{1}$ & $2.6\times10^{1}$ & $3.3\times10^{1}$ & $3.4\times10^{1}$ & $3.2\times10^{1}$
    \end{tabular}
    \caption{Switching times for (PDI) using AMPL--Ipopt with $\mbox{tol} =10^{-6}$.}
    \label{tbl:Ipopt}
\end{table}

\section{Conclusion}
\label{sec:conclusion}

We have studied the problem of finding the best approximation  control for the infeasible double integrator.  We have implemented the DR algorithm for the infeasible double integrator and carried out numerical experiments to demonstrate the effectiveness of the DR algorithm.  For the values of $a$ not so close to $a_c$ (where $a_c$ corresponds to the critically feasible case), the numerical experiments prompt us to use the DR algorithm in the future for those infeasible optimal control problems for which an analytical solution or simplification cannot be obtained.  In the future, it would also be interesting to study other operator splitting and projection methods such as the Peaceman--Rachford algorithm (the case where $\lambda\to 1$ in the relaxed DR algorithm) and the Dykstra algorithm, for infeasible optimal control problems.

\section*{Acknowledgment}
The authors offer their warm thanks to the anonymous reviewer for their insightful comments and suggestions that improved the manuscript. The authors also thank the editor for handling their submission efficiently.

\end{document}